\author{}
\newtheorem{theorem}{Theorem}[section]
\newtheorem{corollary}{Corollary}[section]
\newtheorem{lemma}{Lemma}[section]
\newtheorem{proposition}{Proposition}[section]
\newtheorem{definition}{Definition}[section]
\newtheorem{conjecture}{Conjecture}[section]
\newcommand{\s}{\times}
\newcommand{\QQ}{{\bf Q}}
\newcommand{\CC}{{\bf C}}
\newcommand{\A}{{\bf A}}
\newcommand{\RR}{{\bf R}}
\newcommand{\FF}{{\bf F}}
\newcommand{\XX}{{\bf X}}
\newcommand{\MM}{\mathcal M}
\newcommand{\OO}{\mathcal O}
\newcommand{\mR}{{\mathcal R^\psi}}
\newcommand{\ts}{\widetilde\psi}
\newcommand{\mm}{\mathfrak m}
\newcommand{\TT}{{\bf T}}
\newcommand{\mM}{{\mathcal M^\psi}}
\newcommand{\orho}{\overline\rho}
\newcommand\qed{\hfill \rule{6pt}{6pt}}
\newenvironment{proof}{\noindent{\bf Proof\ }\hspace{1pt}}{\hspace{-5pt}\qed\par\bigskip}
\newcommand{\ZZ}{{\bf Z}}
\def\modulo{{\rm mod}}
\def\End{{\rm End}}
\def\Hom{{\rm Hom}}
\def\Frob{{\rm Frob}}
\def\det{{\rm det}}
\def\galois{{\rm Gal}}
\def\tr{{\rm tr}}
\def\ker{{\rm ker}}
\def\Im{{\rm Im}}
\def\dim{{\rm dim}}
\begin{document}

\title{Congruences between modular forms and related modules }
\author{Miriam Ciavarella}
\date{ }
\maketitle

\begin{abstract}
We fix $\ell$ a prime and let $M$ be an integer such that $\ell\not|M$; let $f\in S_2(\Gamma_1(M\ell^2))$ be a newform supercuspidal of fixed type related to the nebentypus, at $\ell$ and special at a finite set of primes. Let $\TT^\psi$ be the local quaternionic Hecke algebra associated to $f$. The algebra $\TT^\psi$ acts on a module $\mathcal M^\psi_f$ coming from the cohomology of a Shimura curve. Applying the Taylor-Wiles criterion and a recent Savitt's theorem, $\TT^\psi$ is the universal deformation ring of a global Galois deformation problem associated to $\orho_f$. Moreover $\mathcal M^\psi_f$ is free of rank 2 over $\TT^\psi$. If $f$ occurs at minimal level, by a generalization of a Conrad, Diamond and Taylor's result and by the classical Ihara's lemma, we prove a theorem of raising the level and a result about congruence ideals. The extension of this results to the non minimal case is an open problem.

\end{abstract}
Keywords: modular form, deformation ring, Hecke algebra, quaternion algebra, congruences. \\
2000 AMS Mathematics Subject Classification: 11F80
\newpage

\section*{Introduction}

The principal aim of this article is to detect some results about isomorphism of complete intersection between an universal deformation ring and a local Hecke algebra and about cohomological modules free over an Hecke algebra. From this results, it is possible deduce that there is an isomorphism between the quaternionic cohomological congruence module for a modular form and the classical congruence module.\\
Our work take place in a context of search which has its origin in the works of Wiles and Taylor-Wiles on the Shimura-Taniyama-Weil conjecture. Recall that the problem addressed by Wiles in \cite{wi} is to prove that a certain ring homomorphism $\phi:\mathcal R_\mathcal D\to{\bf T}_\mathcal D$ is an isomorphism (of complete intersection), where $\mathcal R_\mathcal D$ is the universal deformation ring for a mod $\ell$ Galois representation arising from a modular form and ${\bf T}_\mathcal D$ is a certain Hecke algebra.  \\
Extending our results to a more general class of representations letting free the ramification to a finite set of prime we need a small generalization of a result of Conrad Diamond and Taylor. We will describe it, using a recent Savitt's theorem and we deduce from it two interesting results about congruences.\\ 
Our first result extends a work of Terracini \cite{Lea} to a more general class of types and allows us to work with modular forms having a non trivial nebentypus.
Our arguments are largely identical to Terracini's in many places; the debt to Terracini's work will be clear throughout the paper. One important change is that since we will work with Galois representations which are not semistable at $\ell$ but only potentially semistable,
we use a recent Savitt's theorem \cite{savitt},  that prove a conjecture of Conrad, Diamond and Taylor (\cite{CDT}, conjecture 1.2.2 and conjecture 1.2.3),  on the size of certain deformation rings parametrizing potentially Barsotti-Tate Galois representations, extending results of Breuil and M\'{e}zard (conjecture 2.3.1.1 of \cite{BM}) (classifying Galois lattices in semistable representations in terms of \lq\lq strongly divisible modules \rq\rq) to the potentially crystalline case in Hodge-Tate weights $(0,1)$. 

Given a prime $\ell$, we fix a newform $f\in S_2(\Gamma_0(N\Delta'\ell^2),\psi)$ with nebentypus $\psi$ of order prime to $\ell$, special at primes dividing $\Delta'$ and such that its local representation $\pi_{f,\ell}$ of $GL_2(\QQ_\ell)$, is associated to a fixed regular character $\chi$ of $\FF_{\ell^2}^\s$ satisfying $\chi|_{\ZZ_\ell^\s}=\psi_\ell|_{\ZZ_\ell^\s}$. We consider the residual Galois representation $\orho$ associated to $f$. We denote by $\TT^\psi$ the $\ZZ_\ell$-subalgebra of $\prod_{h\in\mathcal B}\OO_{h,\lambda}$ where $\mathcal B$ is the set of normalized newforms in $S_2(\Gamma_0(N\Delta'\ell^2),\psi)$ which are supercuspidal of type $\tau=\chi^\sigma\oplus\chi$  at $\ell$ and whose associated representation is a deformation of $\orho$. By the Jacquet-Langlands correspondence  and the Matsushima-Murakami-Shimura isomorphism, one can see such forms in a local component $\mM$ of the $\ell$-adic cohomology of a  Shimura curve. By imposing suitable conditions on the type $\tau$, we describe for each prime $p$ dividing the level, a local deformation condition of $\orho_p$ and applying the Taylor-Wiles criterion in the version of Diamond \cite{D} and Fujiwara, we prove that the algebra $\TT^\psi$ is characterized as the universal deformation ring $\mR$ of our global Galois deformation problem. 
We point out that in order to prove the existence of a family of sets realizing simultaneously the conditions of a Taylor-Wiles system, we make large use of Savitt's theorem \cite{savitt}: assuming the existence of a newform $f$ as above, the tangent space of the deformation functor has dimension one over the residue field. Our first resul is the following:
\begin{theorem}
\begin{itemize}
\item[a)] $\Phi:\mR\to\TT^\psi$ is an isomorphism of complete intersection;
\item[b)] $\mM$ is a free $\TT^\psi$-module of rank 2.
\end{itemize}
\end{theorem}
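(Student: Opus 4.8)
\noindent\emph{Strategy of proof.}
The plan is to run the Taylor--Wiles patching argument, in the version of Diamond \cite{D} and Fujiwara, adapted to the quaternionic cohomology module $\mM$, and to recover both assertions by a dimension count followed by descent. To begin with, $\Phi$ is surjective: by the universal property of $\mR$ the representation $\rho\colon G_\QQ\to GL_2(\TT^\psi)$ carrying the Hecke eigenvalues of the forms in $\mathcal B$ is a deformation of $\orho$ of the prescribed type, hence factors through $\mR$, and $\TT^\psi$ is topologically generated over $\ZZ_\ell$ by the traces $\tr\,\rho(\Frob_p)$ with $p\nmid N\Delta'\ell$, which all lie in $\Im\,\Phi$. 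It therefore remains to produce an inverse, which I would do by comparing both rings, and the module, to a common patched object.

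First I would fix the global deformation problem: the unramified (resp.\ special) local conditions at the primes dividing $N$ (resp.\ $\Delta'$), and at $\ell$ the potentially Barsotti--Tate condition of type $\tau=\chi^\sigma\oplus\chi$ with the fixed determinant. Savitt's theorem \cite{savitt} is invoked here to ensure that the local deformation ring at $\ell$ has the expected size and, via the global Euler characteristic formula, to deduce that the tangent space $H^1_{\LL}(G_{\QQ,S},\mathrm{ad}^0\orho)$ of the global functor is one-dimensional over $k$; equivalently the dual Selmer group $H^1_{\LL^\perp}(G_{\QQ,S},\mathrm{ad}^0\orho(1))$ has a fixed dimension $r$.

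For each integer $n\ge1$ I would then use the Chebotarev density theorem to select a Taylor--Wiles set $Q_n$ of exactly $r$ primes $q$ with $q\equiv1\,(\modulo\,\ell^n)$, unramified for $\orho$ and with $\orho(\Frob_q)$ having distinct eigenvalues, such that relaxing the deformation condition at the primes of $Q_n$ annihilates the dual Selmer group; then the deformation ring $\mR_{Q_n}$ is a quotient of $\OO[[x_1,\dots,x_r]]$. On the automorphic side, one enlarges the level by $Q_n$, adjoins the operators $U_q$ for $q\in Q_n$, and localizes the cohomology of the Shimura curve at the maximal ideal cut out by $\orho$ together with a choice of Frobenius eigenvalues, obtaining a module $\mM_{Q_n}$ carrying an action of the $\ell$-Sylow subgroup $\Delta_{Q_n}\cong(\ZZ/\ell^n\ZZ)^r$ of the diamond operators at $Q_n$. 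The essential point is that $\mM_{Q_n}$ is free over $\OO[\Delta_{Q_n}]$ and that its module of $\Delta_{Q_n}$-coinvariants is canonically $\mM$; in the present potentially semistable situation with nontrivial nebentypus this is precisely where the announced generalization of a result of Conrad, Diamond and Taylor \cite{CDT} is needed, and I expect this, together with checking that sets $Q_n$ meeting all the requirements exist, to be the main obstacle.

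Patching the compatible systems $(\mR_{Q_n},\TT_{Q_n},\mM_{Q_n})$ along a subsequence --- each object being finite modulo any fixed power of the maximal ideal --- produces a module $\mM_\infty$ on which the patched deformation ring $\mR_\infty$, a quotient of $\OO[[x_1,\dots,x_r]]$, and the patched Hecke algebra act, together with a compatible action of $S_\infty=\OO[[y_1,\dots,y_r]]$ under which $\mM_\infty$ is finite and free, and such that $\mM_\infty/(y_1,\dots,y_r)\mM_\infty\cong\mM$ and $\mR_\infty/(y_1,\dots,y_r)\mR_\infty\cong\mR$. Since $\mM_\infty\ne0$ is free over the regular local ring $S_\infty$, which has Krull dimension $r+1$, while $\mR_\infty$ has dimension at most $r+1$, comparing the dimension of the support of $\mM_\infty$ over the two rings forces the surjection $\OO[[x_1,\dots,x_r]]\to\mR_\infty$ to be an isomorphism and forces $\mM_\infty$ to be a maximal Cohen--Macaulay $\mR_\infty$-module, hence free because $\mR_\infty$ is regular. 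As $y_1,\dots,y_r$ is then a regular sequence on $\mR_\infty$ and on $\mM_\infty$, passing to the quotient shows that $\mR$ is a quotient of a regular ring by a regular sequence --- a complete intersection --- that $\Phi\colon\mR\to\TT^\psi$ is an isomorphism (the free, hence faithful, $\mR$-module $\mM$ being acted on through $\TT^\psi$), and that $\mM$ is free over $\TT^\psi$, which is part a) and the freeness in part b). Finally the rank equals $\dim_k\mM\otimes_{\TT^\psi}k=\dim_k\mM_\infty\otimes_{\mR_\infty}k$, the dimension of the $\orho$-isotypic subspace of the mod $\ell$ cohomology of the Shimura curve; by Poincar\'e duality and multiplicity one for $\orho$, exactly as for $H^1$ of a modular curve at a non-Eisenstein maximal ideal, this dimension is $2$, completing part b).
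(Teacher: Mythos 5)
Your proposal follows essentially the same route as the paper: both run the Taylor--Wiles argument in the Diamond--Fujiwara formulation for the quaternionic cohomology module, use Savitt's theorem to pin down the local deformation ring at $\ell$ (so that the Selmer/dual-Selmer balance works out), and conclude freeness of $\mM$ and the complete-intersection isomorphism from the patched (or axiomatic TWS) criterion. The paper phrases the construction as verifying conditions (TWS1)--(TWS6) of Terracini and invoking Diamond's Theorem 2.1 rather than spelling out the patched module $\mM_\infty$, but this is the same argument; note only that the one-dimensionality Savitt's theorem gives is that of the \emph{local} tangent space $L_\ell$ (i.e.\ $\RR^D_{\OO,\ell}\simeq\OO[[X]]$), not of the global tangent space, which then enters the Euler-characteristic identity $\dim_k Sel_Q-\dim_k Sel^*_Q=|Q|$ in the usual way.
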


\noindent We observe that in \cite{CDT} the authors assume that the type $\tau$ is strongly acceptable for $\orho_\ell$.  In this way they assure the existence of a modular form under their hyphotesis. Since we are interested to study the quaternionic cohomolgical module associated to the modular form $f$, as a general hyphotesis we suppose that there exist  a modular form $f$ in our conditions, in other words we are assuming that uor cohomological module is not empty. 
\noindent Keeping the definitions as in \cite{CDT}, Savitt's result allows to suppress the assumption of acceptability in the definition of strongly acceptability, thus it is possible to extend Conrad, Diamond and Taylor's result \cite{CDT} relaxing the hypotheses on the residual representation.\\
\noindent Under the hypothesis that $f$ occurs with minimal level (i.e. the ramification at primes $p$ dividing the Artin conductor of the Galois representation $\rho_f$ is equal to the ramification of $\orho_f$ at $p$) 
the module $\mM$, used to construct the Taylor-Wiles system, can be also seen as a part of a module $\mathcal M^{\rm mod}$ coming from the cohomology of a modular curve, as described in \cite{CDT} \S5.3. Applying the extended Conrad, Diamond and Taylor's methods and by the Ihara's lemma for the cohomology of modular curves \cite{CDT}, the first part of our result can be extended by allowing the ramification on a set of primes $S$ disjoint from $N\Delta'\ell$. In this way it is possible to obtain results of the form:
\begin{itemize}
\item $\Phi_S:\mathcal R^\psi_S\to\TT^\psi_S$ is an isomorphism of complete intersections,
\end{itemize}
where $\mathcal R^\psi_S$ is an universal deformation ring letting free the ramification at primes in $S$, $\TT^\psi_S$ is a local Hecke algebra. 
We observe that, since there is not an analogous of the Ihara's lemma for the cohomology of the Shimura curve, we don't have any information about the correspondent module $\mathcal M^\psi_S$ coming from the cohomology of a Shimura curve. In particular, in the general case $S\not=\emptyset$,  it is not possible to show that  $\mathcal M^\psi_S$ is free over $\TT^\psi_S$  

We observe that as a consequence of the generalization of the Cornrad, Diamond and Taylor's result, two results about raising the level of modular forms and about the ideals of congruence follows. 
Let $S_1,S_2$ be two subsets of $\Delta_2$, we slightly modify the deformation problem, by imposing the condition sp at primes $p$ in $S_2$ and by allowing ramification at primes in $S_1$. Let we denote by $\eta_{S_1,S_2}$ the congruence ideal of a modular form relatively to the set $\mathcal B_{S_1,S_2}$ of the newforms  of weight 2, Nebentypus $\psi$, level dividing $N\Delta_1\Delta_2\ell$ which are supercuspidal of type $\tau$ at $\ell$, special at primes in $\Delta_1S_2$. We prove that there is an isomorphism of complete intersections between the universal deformation ring $\mathcal R^\psi_{S_1,S_2}$ and the Hecke algebra $\TT^\psi_{S_1,S_2}$ acting on the space $\mathcal B_{S_1,S_2}$ and $$\eta_{\Delta_2,\emptyset}=C\eta_{S_1,S_2}$$ where $C$ is a constant depending of the modular form. In particular  we prove the following result:
\begin{theorem}
Let $f=\sum a_nq^n$ be a normalized newform in $S_2(\Gamma_0(M\ell^2),\psi)$ supercuspidal of type $\tau=\chi\oplus\chi^\sigma$ at $\ell$, special at primes in a finite set $\Delta'$, there exist $g\in S_2(\Gamma_0(qM\ell^2),\psi)$ supercuspidal of type $\tau$ at $\ell$, special at every prime $p|\Delta'$ such that $f\equiv g\ \modulo\ \lambda$ if and only if $$a_q^2\equiv\psi(q)(1+q)^2\ \modulo\ \lambda$$ where $q$ is a prime such that $(q,M\ell^2)=1,$ $q\not\equiv-1\ \modulo\ \ell$. 
\end{theorem}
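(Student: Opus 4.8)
\medskip\noindent\textbf{Proof proposal.}\ The plan is to reprove Ribet's level-raising at $q$ in the present setting (supercuspidal of type $\tau$ at $\ell$, special at $\Delta'$), using the generalization of the Conrad--Diamond--Taylor result recalled above together with the classical Ihara's lemma for modular curves. Since $f$ occurs at minimal level, the quaternionic module $\mM$ is, as in \cite{CDT} \S5.3, a direct factor of a local component of the cohomology $\mathcal M^{\rm mod}$ of a modular curve, where Ihara's lemma is available; write $\mm$ for the maximal ideal of the relevant Hecke algebra determined by $\orho_f$. Consider the deformation problem of $\orho_f$ obtained from the one defining $\mR$ by replacing the unramified condition at $q$ with the ``special at $q$'' condition (an unramified twist of the Steinberg representation), and let $\mathcal R^\psi_{\{q\}}\cong\TT^\psi_{\{q\}}$ be the corresponding universal ring and local Hecke algebra supplied by the generalized \cite{CDT}; here $\TT^\psi_{\{q\}}$ acts faithfully on the $\mm$-localization of the part of $\mathcal M^{\rm mod}$ at level $qM\ell^2$ carved out by type $\tau$ at $\ell$ and the special condition at the primes of $\Delta'\cup\{q\}$, and its $\overline{\QQ}_\ell$-points are precisely the newforms $g$ of the statement (necessarily new at $q$, hence of exact level $qM\ell^2$, since ``special at $q$'' forces $q$ into the level while the remaining local conditions pin down the rest). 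It therefore suffices to prove that $\TT^\psi_{\{q\}}\neq0$ if and only if $a_q^2\equiv\psi(q)(1+q)^2\pmod{\lambda}$.

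For ($\Rightarrow$): suppose such a $g$ exists. Being new, hence special, at $q$, local-global compatibility shows that $\rho_g|_{D_q}$ has Frobenius-semisimplification $\mu\oplus\mu\varepsilon$ with $\mu$ unramified, $\mu(\Frob_q)=a_q(g)$ and $\varepsilon$ the cyclotomic character; since $\det\rho_g=\psi\varepsilon$ we get $a_q(g)^2q=\psi(q)q$, i.e.\ $a_q(g)^2=\psi(q)$, and the characteristic polynomial of $\Frob_q$ on $\rho_g|_{D_q}$ is $(X-a_q(g))(X-q\,a_q(g))$. Now $q\nmid M\ell^2$, so $\rho_f$ is unramified at $q$ with $\rho_f(\Frob_q)$ of characteristic polynomial $X^2-a_qX+\psi(q)q$; reducing modulo $\lambda$ and using $\orho_g\cong\orho_f$ forces $a_q\equiv(1+q)\,a_q(g)$, hence $a_q^2\equiv(1+q)^2a_q(g)^2\equiv\psi(q)(1+q)^2\pmod{\lambda}$.

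For ($\Leftarrow$): suppose $a_q^2\equiv\psi(q)(1+q)^2\pmod{\lambda}$. Since $q\not\equiv-1\pmod{\ell}$ we may set $\gamma=a_q/(1+q)$, and then the characteristic polynomial of $\orho_f(\Frob_q)$ factors as $(X-\gamma)(X-q\gamma)$ with $\gamma^2\equiv\psi(q)\neq0$; so the unramified $\orho_f|_{D_q}$ has Frobenius eigenvalues in ratio $q$, which is exactly the local condition guaranteeing the existence of a special-at-$q$ lift (take an unramified $\mu$ with $\mu^2=\psi_q$ and $\mu(q)\equiv\gamma$ and twist the Steinberg representation by it). To globalize, I would run Ribet's level-raising argument inside $\mathcal M^{\rm mod}$: after localizing at $\mm$, the $q$-old submodule of $\mathcal M^{\rm mod}$ at level $qM\ell^2$ is $(\mathcal M^{\rm mod}_{M\ell^2})_\mm^{2}$, on which $U_q$ acts through the companion matrix of $X^2-T_qX+q\psi(q)$, whose reduction modulo $\mm$ has $\gamma$ among its eigenvalues; Ihara's lemma for modular curves (\cite{CDT}), combined with the Eichler--Shimura relation, then forces the $q$-new quotient at $\mm$ to be non-zero precisely because $\gamma$ also satisfies the $q$-new relation $U_q^2=\psi(q)$, i.e.\ $\gamma^2\equiv\psi(q)$, which is the hypothesis. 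A $\overline{\QQ}_\ell$-point of the resulting $q$-new Hecke algebra is then a newform $g$ of exact level $qM\ell^2$ with $\orho_g\cong\orho_f$, that is $f\equiv g\pmod{\lambda}$ (enlarging $\lambda$ if necessary); and since this part lies in the local component attached to $\orho_f$ with its prescribed local conditions --- this is where the generalized \cite{CDT} is used --- $g$ is supercuspidal of type $\tau$ at $\ell$ and special at every prime of $\Delta'$.

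The two implications give the equivalence, hence the theorem. The main obstacle is the direction ($\Leftarrow$), and within it the globalization step that produces the $q$-new form from the local special lift at $q$: this is exactly where the classical Ihara's lemma for modular curves is indispensable --- and the reason $f$ is taken at minimal level, so that $\mM$ is recovered inside $\mathcal M^{\rm mod}$ --- there being no such lemma available directly on the Shimura curve; the hypothesis $q\not\equiv-1\pmod{\ell}$ intervenes here to make $\gamma$ well defined and the degeneracy-map computation non-degenerate.
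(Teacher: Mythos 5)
Your proof is correct and reaches the theorem, but by a different route from the paper. You reprove level-raising directly in the style of Ribet inside $\mathcal M^{\rm mod}$: ($\Rightarrow$) is the local Weil--Deligne computation at $q$, and ($\Leftarrow$) uses Ihara's lemma together with the degenerate mod-$\mm$ reduction of the companion matrix of $X^2-T_qX+q\psi(q)$ to force a nonzero $q$-new quotient. The paper instead obtains the theorem as an immediate consequence of the congruence-ideal machinery of Section~\ref{con}: Theorem~\ref{cong} (proved by the arguments of \cite{Lea}) gives $\mathcal B_{\emptyset,\Delta_2}\neq\emptyset$ and the multiplicativity $\eta_{h,S,\Delta_2/S}=\bigl(\prod_{p|S}y_p(h)\bigr)\eta_{h,\emptyset,\Delta_2}$; combined with \cite{CDT} \S5.5 this yields the corollary $\eta_{h,\Delta_2,\emptyset}=\prod_p x_p(h)\prod_p y_p(h)\,\eta_{h,S_1,S_2}$, and the theorem follows on taking $\Delta_2=\{q\}$ and using $(x_q(h))=(a_q(h)^2-\psi(q)(1+q)^2)$ for $h$ unramified at $q$. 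Both routes ultimately rely on the same inputs --- Ihara's lemma for modular curves and the Savitt-generalized CDT result (Proposition~\ref{gcdt}) at minimal level, which you correctly flag --- so neither is more general; the paper's route extracts the sharper quantitative identity between congruence ideals, of which the theorem is the mod-$\lambda$ shadow, while yours is more self-contained and closer to Ribet's original. The one step you should tighten is the globalization in ($\Leftarrow$): you assert, rather than show, that the $q$-new $\mm$-localization is nonzero. The actual Ribet argument is that if it vanished, the old-to-new degeneracy map would become an isomorphism after localization at $\mm$, while Ihara's lemma gives injectivity mod $\lambda$ and the adjoint composite has determinant a unit multiple of $T_q^2-\psi(q)(1+q)^2\in\mm$, a contradiction; one also needs Proposition~\ref{gcdt} to know the resulting $\overline\QQ_\ell$-point is a newform with all the prescribed local behavior (type $\tau$ at $\ell$, special at $\Delta'$), which you invoke without verifying.
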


We observe that our results concerning the cohomological modules, holds only at the minimal level since a quaternionic analogue of the Ihara's lemma is not available in this case. Let $S$ be a finite set of primes not dividing $M\ell$; we fix $f\in S_2(\Gamma_0(N\Delta'\ell^2 S),\psi)$ supercuspidal of type $\tau$ at $\ell$, special at primes $p|\Delta'$. If we modify our Galois deformation problem allowing ramification at primes in $S$, we obtain a new universal deformation ring $\mathcal R_S$ and a new Hecke algebra $\TT^\psi_S$ acting on the newforms giving rise to such representation.  We make the following conjecture:
\begin{conjecture}\label{conj1}
\begin{itemize}
\item $\mathcal R_S\to\TT^\psi_S$ is an isomorphism of complete intersection;
\item let $\mathcal M^\psi_S$ be the module $H^1(\XX_1(NS),\OO)_{\mm_S}^{\widehat\psi}$ coming from the cohomology of the Shimura curve $\XX_1(NS)$ associated to the open compact subgroup of $B_\A^{\s,\infty}$, $V_1(NS)=\prod_{p\not|NS\ell}R_p^\s\prod_{p|NS}K_p^1(N)\s(1+u_\ell R_\ell)$ where  $K_p^1(N)$ is defined in section \ref{shi}, and $u_\ell$ is a uniformizer of $B_\ell^\s$. $\mathcal M^\psi_S$ is a free $\TT^\psi_S$-module of rank 2. 
\end{itemize}
\end{conjecture}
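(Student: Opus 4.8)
\medskip
\noindent\textbf{Proof proposal.} The plan is to run the Taylor--Wiles--Kisin patching machinery of Theorem 1.1 at the non-minimal level, the only new ingredient being the local deformation condition at the primes of $S$ and the matching Hecke action on $H^1(\XX_1(NS),\OO)$. First I would fix, for each $p\in S$, the \lq\lq unrestricted\rq\rq\ local condition at $p$, so that $\mathcal R_S$ is the universal ring for deformations of $\orho$ which are of type $\tau$ at $\ell$, special at $\Delta'$, minimal outside $S\Delta'\ell$ and arbitrary at the primes of $S$. Using Savitt's theorem \cite{savitt} at $\ell$ exactly as in the minimal case, the tangent space stays under control: the Greenberg--Wiles global Euler characteristic formula relates the relevant Selmer groups of $\mathcal R_S$ to those of $\mR$ through purely local terms at $S$, and surjectivity $\mathcal R_S\twoheadrightarrow\TT^\psi_S$ follows as usual by attaching Galois representations to the eigenforms in the relevant local component of $H^1(\XX_1(NS),\OO)$ and checking they satisfy the imposed conditions.

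Next I would choose Taylor--Wiles primes $Q_n$ as in the proof of Theorem 1.1 (the choice is insensitive to $S$, since the auxiliary primes touch neither the $\ell$-adic nor the $S$-adic local conditions), and patch the modules $H^1(\XX_1(NSQ_n),\OO)^{\widehat\psi}_{\mm}$ into a module $M_\infty$ over a patched ring $R_\infty\cong\OO[[x_1,\dots,x_r]]$. Quotienting by the patching ideal recovers $\mathcal R_S$, $\mathcal M^\psi_S$ and the $\TT^\psi_S$-module structure. If one knew $M_\infty$ to be free over $R_\infty$ --- equivalently, that $M_\infty$ has depth $\dim R_\infty$ over the regular ring $R_\infty$ --- then the Diamond--Fujiwara numerical criterion (resp.\ Auslander--Buchsbaum) would force $R_\infty$ to act through a complete intersection quotient and $M_\infty$ to be free, whence by specialization both bullets of the conjecture.

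The main obstacle --- and the reason the statement is still a conjecture --- is precisely this depth/freeness input. On a modular curve it is supplied by Ihara's lemma in the form used in \cite{CDT}: the relevant mod $\ell$ cohomology is not annihilated by the pair of degeneracy maps at a new prime, which is exactly what makes the patched module large enough. At minimal level one imports this to the Shimura curve through the comparison $\mathcal M^\psi\hookrightarrow\mathcal M^{\rm mod}$ of \cite{CDT} \S5.3; but for $S\neq\emptyset$ that comparison breaks down, because raising the level at a prime of $S$ produces genuinely different local structures on the two curves, and no analogue of Ihara's lemma for $H^1$ of the Shimura curve at the new primes is presently known.

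Accordingly I would split the conjecture. Part 1 should be provable by running the patching on the modular side, where Ihara's lemma is available, getting $\Phi^{\rm mod}_S:\mathcal R_S\to\TT^{\psi,{\rm mod}}_S$ an isomorphism of complete intersection, and then identifying $\TT^\psi_S$ with $\TT^{\psi,{\rm mod}}_S$ via comparison of Hecke eigensystems (Jacquet--Langlands): the complete intersection property descends along this identification. Part 2, the freeness of $\mathcal M^\psi_S$ over $\TT^\psi_S$, is equivalent to a quaternionic Ihara-type statement at the primes of $S$ --- concretely that, after localizing at $\mm$, the cokernel of the sum of the two level-raising degeneracy maps into $H^1$ of the higher-level Shimura curve is $\OO$-torsion free. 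Establishing this (for instance by analysing the mod $\ell$ fibre of the Shimura curve and the contribution of its bad reduction at the new primes, in the spirit of the Manning--Shotton and Clozel--Harris--Taylor approaches to Ihara's lemma) is the hard part, and it is what remains open.
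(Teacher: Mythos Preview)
Your diagnosis is correct: this is stated in the paper as a conjecture, not a theorem, and you have pinpointed the right obstruction --- a quaternionic analogue of Ihara's lemma. The paper, however, does not frame the reduction via direct patching at level $NS$. Immediately after stating the conjecture, and again in the final section, it reduces the whole statement to Conjecture~0.2 (injectivity of $\alpha_\mm\otimes k$ for the pair of degeneracy maps on the Shimura curve) by the classical inductive scheme on $|S|$: assuming the result at level $NS$, one combines a self-duality pairing on $\mathcal M^\psi_S$ (announced there as ``to appear'') with Conjecture~0.2 to compare congruence modules at levels $NS$ and $NSq$, and then invokes the De~Smit--Rubin--Schoof/Diamond numerical criterion. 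Your Taylor--Wiles--Kisin patching is a more modern repackaging of exactly the same obstruction --- the missing depth of the patched module over $R_\infty$ is what the Ihara input would supply --- so the two routes stall at the same point, but the paper's is an induction-plus-numerical-criterion argument rather than a patching one.

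Two small corrections. First, in the minimal case the paper does \emph{not} import freeness from the modular curve: Theorem~5.1 is proved by constructing the Taylor--Wiles system directly out of the Shimura-curve cohomology $H^1(\Phi'_Q,\OO(\ts))_{\mm_Q}$, and the comparison $\mathcal M^\psi\simeq\mathcal M^{\rm mod,\psi}_\emptyset$ is deduced \emph{a posteriori} from the two independently established $R=\TT$ theorems. Second, your proposed splitting of the conjecture agrees with the paper's own position: the Introduction already records that the first bullet --- the complete-intersection isomorphism $\mathcal R^\psi_S\simeq\TT^\psi_S$ --- follows from the generalized Conrad--Diamond--Taylor argument on modular curves (where the classical Ihara lemma is available), transported via Jacquet--Langlands; only the freeness of $\mathcal M^\psi_S$ over $\TT^\psi_S$ is genuinely open.
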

\noindent Conjecture \ref{conj1} easily follows from the following conjecture:

\begin{conjecture}\label{noscon}
Let $q$ be a prime number such that $q\not|N\Delta'\ell^2$. We fix a maximal non Eisenstein ideal of the Hecke algebra  $\TT_0^{\widehat\psi}(N)$ acting on the group $H^1(\XX_1(N),\OO)^{\widehat\psi}$. Let $\XX_1(N)$ be the Shimura curve  $$\XX_1(N)=B^\s\setminus B_\A^\s/K_\infty^+V_1(N)$$ where $$V_1(N)=\prod_{p\not|N\ell}R_p^\s\prod_{p|N}K_p^1(N)\s(1+u_\ell R_\ell)$$ where $K_p^1(N)$ is defined in section \ref{shi}, and $u_\ell$ is a uniformizer of $B_\ell^\s$. The map $$\alpha_\mm:H^1(\XX_1(N),\OO)_\mm^{\widehat\psi}\s H^1(\XX_1(N),\OO)_\mm^{\widehat\psi}\to H^1(\XX_1(Nq),\OO)_{\mm^q}^{\widehat\psi}$$ is such that   $\alpha\otimes_\OO k$ is injective, where $\mm^q$ is the inverse image of the ideal $\mm$ under the natural map $\TT_0^{\widehat\psi}(Nq)\to\TT_0^{\widehat\psi}(N)$ and $k=\OO/\lambda$.

\end{conjecture}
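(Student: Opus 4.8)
The plan is to establish Conjecture \ref{noscon} — the quaternionic Ihara lemma for the $\widehat\psi$-part of the cohomology of the Shimura curve $\XX_1(N)$ — by transporting the known modular-curve Ihara lemma (as used in \cite{CDT}) across the Jacquet--Langlands correspondence, combined with a careful analysis of the degeneracy maps at the auxiliary prime $q$. First I would set up the two degeneracy maps $\XX_1(Nq)\to\XX_1(N)$ coming from the inclusions of level structures at $q$ (one the ``identity'' map, the other twisted by the Hecke operator at $q$, built from a uniformizer/double coset at $q$), and check that the induced map $\alpha$ on $H^1$ is Hecke-equivariant away from $q$ and compatible with the nebentypus projector $\widehat\psi$ and with localization at $\mm$. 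The statement to prove is then that $\alpha\otimes_\OO k$ is injective after localizing at the non-Eisenstein maximal ideal $\mm$.

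The key steps, in order, are: (1) reduce to a statement about $k[\galois]$-modules (or equivalently about the mod $\ell$ cohomology of the curve) by right-exactness of $\otimes k$ and the fact that $H^1(\XX_1(N),\OO)^{\widehat\psi}$ is $\OO$-torsion-free on the relevant component, so that the cokernel of $\alpha$ being $\OO$-flat at $\mm$ is what forces injectivity of $\alpha\otimes k$; (2) identify, via Jacquet--Langlands and the Matsushima--Murakami--Shimura isomorphism already invoked in the excerpt, the $\mm$-localized quaternionic cohomology with the corresponding piece of the cohomology of a modular curve of level prime to the discriminant of $B$, matching the two degeneracy maps on both sides; (3) invoke the classical Ihara lemma for modular curves (the form stated and used in \cite{CDT}, which gives injectivity mod $\ell$ of exactly this sort of sum-of-degeneracy map at $\mm$ non-Eisenstein), and pull the injectivity back through the isomorphism of step (2); (4) check that the ``non-Eisenstein'' hypothesis on $\mm$ on the quaternionic side corresponds to the non-Eisenstein hypothesis needed for the modular Ihara lemma, and that the hypothesis $q\not\equiv -1\ \modulo\ \ell$ (equivalently that $q$ is not trivial in the relevant sense) is what guarantees the two degeneracy maps are ``independent'' so that their sum, rather than each individually, is injective. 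Finally I would deduce Conjecture \ref{conj1} from Conjecture \ref{noscon} by the now-standard Taylor--Wiles/Diamond--Fujiwara machinery: injectivity of $\alpha\otimes k$ feeds into the patching argument to show $\mathcal M^\psi_S$ is free of rank $2$ over $\TT^\psi_S$ and that $\mathcal R_S\to\TT^\psi_S$ is an isomorphism of complete intersection, exactly as the minimal case was handled to prove Theorem 0.1.

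The main obstacle I anticipate is step (2): the Jacquet--Langlands correspondence is an abstract isomorphism of Hecke modules, but Ihara's lemma is a statement about a specific geometric map (the pair of degeneracy coverings), and one must check that the JL isomorphism can be chosen to intertwine the quaternionic degeneracy maps with the modular ones — this is not automatic and is precisely the kind of compatibility that is delicate at the bad primes and at $\ell$ (where our representations are only potentially Barsotti--Tate, not semistable). Indeed this is presumably why the authors leave it as a conjecture: there is no purely geometric Ihara lemma known for Shimura curves (the source curve $\XX_1(Nq)$ and target $\XX_1(N)$ have no common ``congruence subgroup'' interpretation that makes the classical topological argument via the congruence subgroup property go through), so the transfer has to be done representation-theoretically, and controlling the behaviour of the nebentypus projector $\widehat\psi$ and the supercuspidal type $\tau$ at $\ell$ under this transfer is the crux. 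A secondary difficulty is verifying that the cokernel of $\alpha$ is $\OO$-flat after localizing at $\mm$ without already knowing the freeness statement one is trying to prove — this circularity is what couples Conjectures \ref{conj1} and \ref{noscon} and is the reason both are stated together rather than one being proved outright.
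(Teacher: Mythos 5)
The statement you are asked to prove is \emph{Conjecture}~\ref{noscon}, not a theorem: the paper supplies no proof, and indeed the entire surrounding discussion (and the existence of Conjecture~\ref{conj1}, which would follow from it) hinges on its being open. You implicitly recognize this in your final paragraph, but the body of your ``proof proposal'' is written as though the transport-via-Jacquet--Langlands argument could be carried out, when in fact it cannot. The Jacquet--Langlands correspondence gives an isomorphism of Hecke modules, not a geometric map between the Shimura curve $\XX_1(N)$ (with $B$ ramified at $\Delta=\ell\Delta'$) and a modular curve; there is no way to make the quaternionic degeneracy maps at $q$ ``correspond'' to the modular ones in a manner that would allow one to pull injectivity of $\alpha\otimes k$ back through JL. So step~(2)--(3) of your plan is not a gap to be filled later — it is exactly the missing ingredient, and no amount of care with the $\widehat\psi$-projector or localization at $\mm$ recovers it.

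Two further corrections. First, the decisive obstruction is not merely that the representations are only potentially Barsotti--Tate at $\ell$; it is that here $\ell$ \emph{divides} the discriminant of $B$. Diamond and Taylor's proof of the Shimura-curve Ihara lemma (cited in the paper via \cite{DT}, \cite{DTi}) is a genuinely geometric/integral $p$-adic Hodge-theoretic argument on the special fibre of the Shimura curve at $\ell$, and it requires $\ell\nmid\Delta$; when $B$ ramifies at $\ell$ the geometry of the bad reduction changes and that argument breaks down. This, rather than the absence of a congruence-subgroup-property argument, is the precise point you should have named. Second, you inserted the hypothesis $q\not\equiv-1\ \modulo\ \ell$ into the statement; Conjecture~\ref{noscon} only assumes $q\nmid N\Delta'\ell^2$. (That congruence condition appears elsewhere in the paper, in the level-raising Theorem.) Your concluding remarks about how Conjecture~\ref{noscon} would imply Conjecture~\ref{conj1} via Taylor--Wiles/Diamond--Fujiwara patching match the paper's stated intent, and your diagnosis of \emph{why} the conjecture is open is essentially correct — but this is an explanation of an obstruction, not a proof, and you should present it as such.
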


\noindent This conjecture would provide an analogue for the Shimura curves  of the Ihara's Lemma in case $\ell|\Delta$. In \cite{DT} and in \cite{DTi}, Diamond and Taylor show that if $\ell$ not divides the discriminant of the indefinite quaternion algebra, then the analogue of conjecture \ref{noscon} holds.

\section{Notations}
 
\noindent For a rational prime $p$, $\ZZ_p$ and $\QQ_p$ denote the ring of $p$-adic integers and the field of $p$-adic numbers, respectively. If $A$ is a ring, then $A^\s$ denotes the group of invertible elements of $A$. We will denote by $\A$ the ring of rational ad\'eles, and by $\A^\infty$ the finite ad\'eles.\\
Let  $B$ be a quaternion algebra on $\QQ$, we will denote by $B_\A$ the adelization of $B$, by $B_\A^\times$ the topological group of invertible elements in $B_\A$ and $B_\A^{\times,\infty}$ the subgroup of finite ad\'eles.\\
Let $R$ be a maximal order in $B$.For a rational place $v$ of $\QQ$ we put $B_v=B\otimes_\QQ\QQ_v$; if $p$ is a finite place we put $R_p=R\otimes_\ZZ\ZZ_p$. \\
 If $p$ is a prime not dividing the discriminant of $B$, included $p=\infty$, we fix an isomorphism $i_p:B_p\to M_2(\QQ_p)$ such that if $p\not=\infty$ we have $i_p(R_p)=M_2(\ZZ_p)$.\\
We write $GL_2^+(\RR)=\{g\in GL_2(\RR)|\ det\ g>0\}$ and $K_\infty=\RR^\times O_2(\RR),$ $K_\infty^+=\RR^\times SO_2(\RR).$
If $K$ is a field, let $\overline K$ denote an algebraic closure of $K$; we put $G_K=\galois(\overline K/K)$. For a local field $K$, $K^{unr}$ denotes the maximal unramified extension of $K$ in $\overline K$; we put $I_K=\galois(\overline K/K^{unr})$, the inertia subgroup of $G_K$. For a prime $p$ we put $G_p=G_{\QQ_p}$, $I_p=I_{\QQ_p}$. If $\rho$ is a representation of $G_\QQ$, we write $\rho_p$ for the restriction of $\rho$ to a decomposition group at $p$.

\section{The local Hecke algebra $\TT^\psi$}\label{de}

We fix a prime $\ell>2$. Let $\ZZ_{\ell^2}$ denote the integer ring of $\QQ_{\ell^2}$, the unramified quadratic extension of $\QQ_\ell$. Let $M\not=1$ be a square-free integer not divisible by $\ell$. 
We fix $f$ an eigenform in $S_2(\Gamma_1(M\ell^2))$, then $f\in S_2(\Gamma_0(M\ell^2),\psi)$ for some Dirichlet character $\psi:(\ZZ/M\ell^2\ZZ)^\s\to\overline\QQ^\s.$\\
For abuse of notation, let $\psi$ be the adelisation of the Dirichlet character $\psi$ and we denote by $\psi_p$ the composition of $\psi$ with the inclusion $\QQ_p^\s\to \A^\s$.\\
We fix a regular character  $\chi:\ZZ_{\ell^2}^\s\to \overline\QQ^\s$ of conductor $\ell$ such that $\chi|_{\ZZ_\ell^\s}=\psi_\ell|_{\ZZ_\ell^\s}$
and we extend $\chi$ to $\QQ_{\ell^2}^\s$ by putting $\chi(\ell)=-\psi_\ell(\ell)$. We observe that $\chi$ is not uniquely determined by $\psi$ and, if we fix an embedding of $\overline\QQ$ in $\overline\QQ_\ell$, we can ragard the values of $\chi$ in this field.\\
Since, by local classfield theory, $\chi$ can be regarded as a character of $I_\ell$ and 
we can consider the type $\tau=\chi\oplus\chi^\sigma:I_\ell\to GL_2(\overline\QQ_\ell)$, where $\sigma$ denotes the complex conjugation.\\ 
We fix a decomposition $M=N\Delta'$ where $\Delta'$ is a product of an odd number of primes. If we shoose $f\in S_2(\Gamma_1(M\ell^2))$ such that the automorphic representation $\pi_f=\otimes_v\pi_{f,v}$ of $GL_2(\A)$ associated to $f$ is supercuspidal of type $\tau=\chi\oplus\chi^\sigma$ at $\ell$  and special at every primes $p|\Delta'$, then $\pi_{f,\ell}=\pi_\ell(\chi)$, where $\pi_\ell(\chi)$ is the representation of $GL_2(\QQ_\ell)$ associated to $\chi$,  with  central character  $\psi_\ell$ and conductor $\ell^2$ (see \cite{Ge}, \S 2.8). Moreover, under our hypotheses, the nebentypus $\psi$ factors through $(\ZZ/N\ell\ZZ)^\s$. As a general hypothesis, we assume that  $\psi$ has order prime to $\ell$.\\ 
Let $WD(\pi_\ell(\chi))$ be the $2$-dimensional representation of the Weil-Deligne group at $\ell$ associated to $\pi_\ell(\chi)$ by local Langlands correspondence. Since by  local  classfield theory, we can identify $\QQ_{\ell^2}^\s$ with $W_{\QQ_{\ell^2}}^{ab}$, we can see $\chi$ as a character of $W_{\QQ_{\ell^2}}$;   by (\cite{Ca} \S 11.3), we have 
\begin{equation}\label{ind}
WD(\pi_\ell(\chi))=Ind^{W_{\QQ_{\ell}}}_{W_{\QQ_{\ell^2}}}(\chi)\otimes|\ |_\ell^{-1/2}.
\end{equation}

\noindent Let $\rho_f:G_\QQ\to GL_2(\overline\QQ_\ell)$ be the Galois representation associated to $f$ and $\orho:G_\QQ\to GL_2(\overline\FF_\ell)$ be its reduction modulo $\ell$.\\
As in \cite{Lea}, we impose the following conditions on $\orho$:
\begin{equation}\label{con1}
\orho\ {\rm is\ absolutely\ irreducible};
\end{equation} 
\begin{equation}\label{cond2} 
{\rm if}\ p|N\ {\rm then}\ \orho(I_p)\not=1;
\end{equation}
\begin{equation} \label{rara}
{\rm if}\ p|\Delta'\ {\rm and}\ p^2\equiv 1 \modulo\ \ell\ {\rm then}\ \orho(I_p)\not=1;
\end{equation}
\begin{equation}\label{end}
\End_{\overline\FF_\ell[G_\ell]}(\orho_\ell)=\overline\FF_\ell.
\end{equation}
\begin{equation}\label{c3}
{\rm if}\ \ell=3,\ \orho\ \ {\rm is\ not\ induced\ from\ a\ character\ of\ }\ \QQ(\sqrt{-3}).
\end{equation}
\noindent Let $K=K(f)$ be a finite extension of $\QQ_\ell$ containing $\QQ_{\ell^2}$, $\Im(\psi)$ and the eigenvalues for $f$ of all Hecke operators. Let $\OO$ be the ring of integers of $K$, $\lambda$ be a uniformizer of $\OO$, $k=\OO/(\lambda)$ be the residue field.

\noindent Let $\mathcal B$ denote the set of normalized newforms $h$ in $S_2(\Gamma_0(M\ell^2),\psi)$ which are supercuspidal of type $\chi$ at $\ell$, special at primes dividing $\Delta'$ and whose associated representation $\rho_h$ is a deformation of $\orho$. For $h\in\mathcal B$, let $h=\sum_{n=1}^\infty a_n(h)q^n$ be the $q$-expansion of $h$ and let $\OO_h$ be the $\OO$-algebra generated in $\QQ_\ell$ by the Fourier coefficients of $h$. Let $\TT^\psi$ denote the sub-$\OO$-algebra of $\prod_{h\in\mathcal B}\OO_h$ generated by the elements $\widetilde T_p=(a_p(h))_{h\in\mathcal B}$ for $p\not|M\ell$.\\

\section{Deformation problem}

Our next goal is to state a global Galois deformation condition of $\orho$ which is a good candidate for having $\TT^\psi$ as an universal deformation ring.

\subsection{The global deformation condition of type $(\rm{sp},\tau,\psi)_Q$}\label{univ}

First of all we observe that our local Galois representation $\rho_{f,\ell}=\rho_\ell$ is of type $\tau$ (\cite{CDT}).\\
We let $\Delta_1$ be the product of primes $p|\Delta'$ such that $\orho(I_p)\not=1$, and $\Delta_2$ be the product of primes $p|\Delta'$ such that $\orho(I_p)=1$.\\
We denote by $\mathcal C_\OO$ the category of local complete noetherian $\OO$-algebras with residue field $k$. Let $\epsilon:G_p:\to\ZZ_\ell^\s$ be the cyclotomic character and $\omega:G_p\to\FF_\ell^\s$ be its reduction mod $\ell$. 
By analogy with \cite{Lea}, We define the global deformation condition of type $(\rm{sp},\tau,\psi)_Q$:
\begin{definition}\label{def}
Let $Q$ be a square-free integer, prime to $M\ell$. We consider the functor $\mathcal F_Q$ from $\mathcal C_\OO$ to the category of sets which associate to an object $A\in\mathcal C_\OO$ the set of strict equivalence classes of continuous homomorphisms $\rho:{G_\QQ}\to GL_2(A)$ lifting $\orho$ and satisfying the following conditions:
\begin{itemize}
\item[a$_Q$)] $\rho$ is unramified outside $MQ\ell$;
\item[b)] if $p|\Delta_1N$ then $\rho(I_p)\simeq\orho(I_p)$ ;
\item[c)] if $p|\Delta_2$ then $\rho_p$ satisfies the sp-condition, that is $\tr(\rho(F))^2=(p\mu(p)+\mu(p))^2=\psi_p(p)(p+1)^2$ for a lift $F$ of $\Frob_p$ in $G_p$;
\item[d)] $\rho_\ell$ is weakly of type $\tau$;
\item[e)] $\det(\rho)=\epsilon\psi$, where $\epsilon:G_\QQ\to\ZZ_\ell^\s$ is the cyclotomic character. 
\end{itemize} 
\end{definition}

\noindent It is easy to prove that the functor $\mathcal F_Q$ is representable.

\noindent Let $\mathcal R^\psi_Q$ be the universal ring associated to the functor $\mathcal F_Q$. We put $\mathcal F=\mathcal F_0$, $\mR=\mathcal R^\psi_0$.\\
We observe that if $\orho(I_p)=1$, by the Ramanujan-Petersson conjecture proved by Deligne, the sp-condition rules out thouse defomations of $ \orho$ arising from  modular forms which are not special at $p$.   This space includes the restrictions to $G_p$ of representations coming from forms in $S_2(\Gamma_0(N\Delta'\ell^2),\psi)$ which are special at $p$, but it does not contain those coming from principal forms in $S_2(\Gamma_0(N\Delta'\ell^2),\psi)$.

\section{Cohomological modules coming from the Shimura curves}\label{shi}

We fix a prime $\ell>2$. Let $\Delta'$ be a product of an odd number of primes, different from $\ell$. We put $\Delta=\ell\Delta'$. Let $B$ be the indefinite quaternion algebra over $\QQ$ of discriminant $\Delta$. Let $R$ be a maximal order in $B$. Let $N$ be an integer prime to $\Delta$. We put $$K_p^0(N)=i_p^{-1}\left\{\left(
\begin{array}
[c]{cc}%
a & b\\
c & d
\end{array}
\right)\in GL_2(\ZZ_p)\ |\ c\equiv 0\ \modulo\ N\right\}$$

$$K_p^1(N)=i_p^{-1}\left\{\left(
\begin{array}
[c]{cc}%
a & b\\
c & d
\end{array}
\right)\in \ GL_2(\ZZ_p)\ |\ c\equiv 0\ \modulo\ N,\ a\equiv 1\ \modulo\ N\right\}.$$
Let $s$ be a prime $s\not|N\Delta$. We define $$V_0(N,s)=\prod_{p\not|Ns}R^\s_p\s \prod_{p|N}K_p^0(N)\s K_s^1(s^2)$$ and $$V_1(N,s)=\prod_{p\not|N\ell s}R^\s_p\s\prod_{p|N}K_p^1(N)\s K_s^1(s^2)\s (1+u_\ell R_\ell).$$ 
We observe that there is an isomorphism $$V_0(N,s)/V_1(N,s)\simeq(\ZZ/N\ZZ)^\s\s\FF_{\ell^2}^\s.$$ We will consider  the character $\widehat\psi$ of $V_0(N,s)$ with kernel $V_1(N,s)$ defined as follow: $$\widehat\psi:=\prod_{p|N}\psi_p\s\chi:(\ZZ/N\ZZ)^\s\s\FF_{\ell^2}^\s\to\CC^\s$$
and we shall consider the space $S_2(V_0(N,s),\widehat\psi)$ of quaternionic modular forms with nebentypus $\widehat\psi$.\\ 
For $i=0,1$ let $\Phi_i(N,s)=(GL_2^+(\RR)\s V_i(N,s))\cap B_\QQ^\s$  
and we consider the Shimura curves: $$\XX_i(N,s)=B_\QQ^\s\setminus B^\s_\A/K_\infty^+\s V_i(N,s).$$  
The finite commutative group $\Omega=(\ZZ/N\ZZ)^\s\s\FF_{\ell^2}^\s$  naturally acts on the $\OO$-module $H^*(\XX_1(N,s),\OO)$ via its action on $\XX_1(N,s)$. Since there is an injection of $H^*(\XX_1(N,s),\OO)$ in $H^*(\XX_1(N,s),K)$, by (\cite{H2}, \S 7) the cohomology group $H^1(\XX_1(N,s),\OO)$ is also equipped with the action of Hecke operator $T_p$, for $p\not=\ell$ and diamond operators $\langle n\rangle$ for $n\in(\ZZ/N\ZZ)^\s$.  The Hecke action commutes with the action of $\Omega$, since we do not have a $T_\ell$ operator. The two actions are $\OO$-linear.\\
We can write $\Omega=\Omega_1\s\Omega_2$ where $\Omega_1$ is the $\ell$-Sylow subgroup of $\Omega$ and $\Omega_2$ is the subgroup of $\Omega$ with order prime to $\ell$. Since $\Omega_2\subseteq\Omega$, $\Omega_2$ acts on  $H^*(\XX_1(N,s),\OO)$ and so $H^*(\XX_1(N,s),\OO)=\bigoplus_\varphi H^*(\XX_1(N,s),\OO)^\varphi$ where $\varphi$ runs over the characters of $\Omega_2$ and $H^*(\XX_1(N,s),\OO)^\varphi$ is the sub-Hecke-module of $H^*(\XX_1(N,s),\OO)$ on which $\Omega_2$ acts by the character $\varphi$. Since, by hypothesis, $\psi$ has order prime to $\ell$, $H^*(\XX_1(N,s),\OO)^{\widehat\psi}=H^*(\XX_1(N,s),\OO)^\varphi$ for some character $\varphi$ of $\Omega_2$. So $H^*(\XX_1(N,s),\OO)^{\widehat\psi}$ is a direct summand of $H^*(\XX_1(N,s),\OO)$.
{}
\noindent It follows easily from the Hochschild-Serre spectral sequence that $$H^*(\XX_1(N,s),\OO)^{\widehat\psi}\simeq H^*(\XX_0(N,s),\OO({\widehat\psi}))$$ where $\OO({\widehat\psi})$ is the sheaf $B^\s\setminus B_\A^\s\s\OO/K_\infty^+\s V_0(N,s),$ $B^\s$ acts on $B^\s_\A\s\OO$ on the left by $\alpha\cdot(g,m)=(\alpha g,m)$ and $K_\infty^+\s V_0(N,s)$ acts on the right by $(g,m)\cdot v=(g,m)\cdot(v_\infty,v^\infty)=(gv,{\widehat\psi}(v^\infty)m)$ where $v_\infty$ and $v^\infty$ are respectively the infinite and finite part of $v$ . By translating to the cohomology of groups we obtain (see \cite{sl}, Appendix) $H^1(\XX_1(N,s),\OO)^{\widehat\psi}\simeq H^1(\Phi_0(N,s),\OO(\ts)),$ where $\ts$ is the restriction of ${\widehat\psi}$ to $\Phi_0(N,s)/\Phi_1(N,s)$ and $\OO(\widetilde\psi)$ is $\OO$ with the action of $\Phi_0(N,s)$ given by $a\mapsto\widetilde\psi^{-1}(\gamma)a$.

\noindent It is well know the Hecke action on $H^1(\Phi_0(N,s),\OO(\ts))$ and the structure of the module $H^1(\XX_1(N,s),K)^{\widehat\psi}$ over the Hecke algebra. Let $\TT^{\widehat\psi}_0(N,s)$ be the $\OO$-algebra generated by the Hecke operators $T_p$, $p\not=\ell$  and the diamond operators, acting on $H^1(\XX_1(N,s),\OO)^{\widehat\psi}$.

\begin{proposition}\label{ra}
$H^1(\XX_1(N,s),K)^{\widehat\psi}$ is free of rank 2 over $$\TT^{\widehat\psi}_0(N,s)\otimes K=\TT^{\widehat\psi}_0(N,s)_K.$$
\end{proposition}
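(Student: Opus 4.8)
The plan is to reduce the statement, by base change to $\CC$, to an assertion about spaces of quaternionic cusp forms, and then to decompose these representation-theoretically; the argument follows the pattern of the corresponding result in \cite{Lea}.

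First I would fix an embedding $K\hookrightarrow\CC$ and note that, since $K\to\CC$ is faithfully flat, since cohomology and the $\widehat\psi$-isotypic projector commute with base change (the latter because $|\Omega_2|$ is a unit in $\OO$), and since $\TT^{\widehat\psi}_0(N,s)\otimes_\OO\CC$ is the algebra of Hecke operators on $H^1(\XX_1(N,s),\CC)^{\widehat\psi}$, it is enough to prove that $H^1(\XX_1(N,s),\CC)^{\widehat\psi}$ is free of rank $2$ over $\TT_\CC:=\TT^{\widehat\psi}_0(N,s)\otimes_\OO\CC$. Since $\Delta=\ell\Delta'$ has an even number of prime factors, $B$ is indefinite and $\XX_1(N,s)$ is a compact Riemann surface with no cusps; combining the comparison $H^1(\XX_1(N,s),\CC)^{\widehat\psi}\cong H^1(\Phi_0(N,s),\CC(\ts))$ of Section \ref{shi} with the Eichler--Shimura / Matsushima--Murakami--Shimura isomorphism, the Jacquet--Langlands correspondence and multiplicity one for $GL_2(\A)$, I would obtain an isomorphism of $\TT_\CC$-modules
\[
H^1(\XX_1(N,s),\CC)^{\widehat\psi}\;\cong\;\bigoplus_\pi\bigl(\pi_f^{\,\widehat\psi}\bigr)^{\oplus 2},
\]
where $\pi=\pi_\infty\otimes\pi_f$ runs over the finite set of cuspidal automorphic representations of $B^\times_\A$ such that $\pi_\infty$ is the weight-$2$ discrete series and $\pi_f^{\,\widehat\psi}\neq 0$, $\pi_f^{\,\widehat\psi}$ being the space of vectors of $\pi_f$ on which $V_0(N,s)$ acts through the character $\widehat\psi$; here the exponent $2$ is the contribution of a weight-$2$ discrete series to degree-one cohomology, the Hecke algebra acts through the first tensor factor, and each $\pi$ occurs with multiplicity one. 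This reduces the proposition to proving that $\bigoplus_\pi\pi_f^{\,\widehat\psi}$ is free of rank $1$ over $\TT_\CC$.

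I would establish this locally. By strong multiplicity one the representations in the set above have pairwise distinct systems of eigenvalues for the $T_p$ with $p\nmid N\Delta'\ell s$, so $\TT_\CC=\prod_\pi\TT_\pi$ with $\TT_\pi$ the block cut out by $\pi$; it is therefore enough to see that each $\pi_f^{\,\widehat\psi}$ is cyclic, hence free of rank $1$, over $\TT_\pi$. Writing $\pi_f^{\,\widehat\psi}$ and $\TT_\pi$ as restricted tensor products over the primes $p$, cyclicity becomes a product of local assertions: for $p\nmid N\Delta'\ell s$ the local space is the line of spherical vectors and the local factor of $\TT_\pi$ is $\CC$; for $p=\ell$ it is again a line, since $\pi_\ell$ must be the supercuspidal attached to $\chi$, whose Jacquet--Langlands transfer to $B_\ell^\times$ restricts on $R_\ell^\times/(1+u_\ell R_\ell)\cong\FF_{\ell^2}^\times$ to $\chi\oplus\chi^\sigma$ and so has one-dimensional $\chi$-isotypic part, while $\TT^{\widehat\psi}_0(N,s)$ has no operator at $\ell$; and for $p\mid N$ and $p=s$ the local space is the space of old and new vectors, of dimension at most $2$, respectively at most $3$, the local factor of $\TT_\pi$ is $\CC[U_p]$, and the explicit action of $U_p$ on old vectors (Atkin--Lehner--Li, Casselman) is non-derogatory there, i.e.\ its minimal polynomial equals its characteristic polynomial, so the local space is cyclic over $\CC[U_p]$. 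Tensoring over $p$ gives the cyclicity of $\pi_f^{\,\widehat\psi}$ over $\TT_\pi$, and the proposition follows.

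I expect the main obstacle to be the local analysis of the last paragraph together with the bookkeeping hidden in the displayed decomposition: one must pin down exactly which automorphic representations occur, in terms of the local conditions built into $V_0(N,s)$ and the character $\widehat\psi$ (using that $N$ is squarefree and prime to $\ell s$, and the rigidity of the type $\chi\oplus\chi^\sigma$ at $\ell$), and carry out the companion-matrix computation for the action of $U_p$ on old vectors at the primes dividing $Ns$. Everything else is base change together with the now-standard Eichler--Shimura and Jacquet--Langlands formalism for compact Shimura curves. As an alternative to the rank-one freeness step one could instead show that $\TT_\CC$ is Gorenstein and that Poincar\'e duality on $\XX_1(N,s)$ furnishes a perfect $\TT_\CC$-equivariant pairing on $H^1$, which again comes down to the same local input.
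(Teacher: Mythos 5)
Your argument is correct, and it parallels the paper's proof in its overall shape (reduce to $\CC$, then establish rank-two freeness there) but diverges in how it handles each half. For the descent from $\CC$ to $K$, the paper isolates a Galois-descent lemma ($V\simeq(V\otimes L)^G\simeq((T\otimes L)^n)^G\simeq T^n$ for $L/K$ finite Galois); you instead invoke faithful flatness of $K\to\CC$. As stated that is not quite a complete justification — faithfully flat base change does not descend freeness in general — but it does work here because $\TT^{\widehat\psi}_0(N,s)_K$ is Artinian, hence a finite product of local rings, so one checks freeness one local factor at a time, where it becomes flatness and descends. For the $\CC$-level statement, the paper's proof is short: it cites Terracini's Proposition 1.2 and points to a Hecke-equivariant injection $JL:S_2(V_0(N,s),\widehat\psi)\hookrightarrow S_2(\Gamma_0(s^2\Delta')\cap\Gamma_1(N\ell^2))$, i.e.\ transfers the problem to classical modular curves, where rank-two freeness of $H^1$ over the Hecke algebra is a known consequence of Eichler--Shimura and Atkin--Lehner--Li theory. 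You instead work directly on the quaternionic side: Matsushima--Murakami--Shimura plus multiplicity one gives the decomposition $\bigoplus_\pi(\pi_f^{\widehat\psi})^{\oplus 2}$, and you reduce rank-one cyclicity of $\bigoplus_\pi\pi_f^{\widehat\psi}$ over $\TT_\CC$ to the non-derogatory action of $U_p$ on oldform spaces at $p\mid Ns$, together with the fact that the $\chi$-isotypic line at $\ell$ is one-dimensional while there is no operator at $\ell$ in the algebra. This is essentially what Terracini's argument (and the Atkin--Lehner input behind it) boils down to; your version simply inlines that content rather than routing through the JL transfer. The trade-off: the paper's route leans on a previously established classical statement and so is shorter to write, while yours is self-contained on the quaternionic side and makes the local conditions built into $V_0(N,s)$ and $\widehat\psi$ explicit, which is where the real work is.

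Two small points worth tightening if you keep this route: (i) replace the bare appeal to faithful flatness by the Artinian-local reduction sketched above, or by the paper's Galois-descent lemma applied to a finite Galois $L/K$ large enough to split the Hecke algebra (then pass from $L$ to $\CC$ for free); (ii) be explicit that the operators $T_p$ for $p\mid Ns$ in $\TT^{\widehat\psi}_0(N,s)$ are the $U_p$'s, since that is what makes your local cyclicity claim at $p\mid Ns$ match the generators of the algebra.
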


\noindent The proof of proposition \ref{ra}, easy follows from the following lemmas:
\begin{lemma}
Let  $L\supseteq K$ be two Galois fields, let $V$ be a vector space on $K$ and let $T$ be a $K$-algebra. If $V\otimes L$ is free of rank $n$ on $T\otimes L$ then $V$ is free of rank $n$ on $T$. 
\end{lemma}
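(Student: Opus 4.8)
\noindent The plan is to deduce this from faithfully flat descent; the only role of the field extension $L\supseteq K$ is that $L$ is faithfully flat over $K$, so that $T\otimes_K L$ is a faithfully flat $T$-algebra and $\mathrm{Spec}(T\otimes_K L)\to\mathrm{Spec}(T)$ is surjective. I will use, but only at the very last step, that $V$ is finite dimensional over $K$ (equivalently, when $n\geq1$, that $T$ is a finite $K$-algebra); this costs nothing in the application at hand, where $T=\TT^{\widehat\psi}_0(N,s)_K$ has finite rank and $V=H^1(\XX_1(N,s),K)^{\widehat\psi}$ is finite dimensional.

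\noindent First I would descend projectivity. An isomorphism $V\otimes_K L\simeq(T\otimes_K L)^n$ of $T\otimes_K L$-modules in particular realizes $V\otimes_K L$ as a finitely presented and flat $T\otimes_K L$-module. Since the properties ``finitely presented'' and ``flat'' descend along the faithfully flat map $T\to T\otimes_K L$, the $T$-module $V$ is finitely presented and flat, hence finitely generated projective over $T$.

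\noindent Next I would identify the rank. Given a prime $\mathfrak p$ of $T$, pick a prime $\mathfrak q$ of $T\otimes_K L$ above it; since forming the fibre commutes with base change, $\dim_{\kappa(\mathfrak q)}\big((V\otimes_K L)\otimes_{T\otimes_K L}\kappa(\mathfrak q)\big)=\dim_{\kappa(\mathfrak p)}\big(V\otimes_T\kappa(\mathfrak p)\big)$, and the left side equals $n$ because $V\otimes_K L$ is free of rank $n$. So $V$ is finitely generated projective of constant rank $n$ over $T$. As $T$ is a finite $K$-algebra it is Artinian, hence a finite product $T\simeq\prod_i T_i$ of local rings; a finitely generated projective module over a local ring is free, so each $V\otimes_T T_i$ is free over $T_i$ of rank $n$, whence $V\simeq\prod_i T_i^{\,n}\simeq T^{\,n}$ is free of rank $n$ over $T$.

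\noindent One can equivalently argue by Galois descent: $G=\galois(L/K)$ acts semilinearly on $V\otimes_K L$ over $T\otimes_K L$, with invariants $V$ and $T$; a choice of isomorphism $(T\otimes_K L)^n\simeq V\otimes_K L$, compared with its $G$-translates, gives a $1$-cocycle with values in $GL_n(T\otimes_K L)$, and the triviality of the associated $H^1$ (which again uses that $T\otimes_K L$ is semilocal) lets one make the isomorphism $G$-equivariant and pass to invariants. The one genuinely delicate point — and the sole place finiteness is needed — is upgrading ``finitely generated projective of constant rank $n$'' to ``free of rank $n$'': this fails over a general $K$-algebra $T$ and is rescued here precisely by $T$ being Artinian (semilocal), equivalently by the vanishing of the relevant non-abelian $H^1$ of $GL_n$. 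All the remaining ingredients (descent of finite presentation and of flatness, and the behaviour of fibres under base change) are routine.
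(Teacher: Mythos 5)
Your primary argument (faithfully flat descent of finite presentation and flatness, a fibre\mbox{-}wise rank computation, then ``finitely generated projective of constant rank over an Artinian ring is free'') is correct and complete, but it is not the route the paper takes. The paper's proof is exactly the Galois-descent chain you offer as an ``equivalent'' alternative at the end: with $G=\galois(L/K)$ it writes $V\simeq(V\otimes L)^G\simeq((T\otimes L)^n)^G\simeq((T\otimes L)^G)^n\simeq T^n$. However, the paper applies the second of those isomorphisms as though a chosen $T\otimes L$-module isomorphism $V\otimes L\simeq(T\otimes L)^n$ were automatically $G$-equivariant; as you correctly observe, that is the sole delicate step, and making the isomorphism $G$-equivariant requires trivializing a $1$-cocycle in $GL_n(T\otimes L)$, which in turn uses that $T\otimes L$ is semilocal (Artinian). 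So the paper's one-line proof silently depends on the same finiteness hypothesis that you state explicitly, and as written it has a gap at precisely the point you flag. Your faithfully-flat formulation is arguably preferable: it replaces the Galois hypothesis on $L/K$ by the weaker hypothesis that $K\to L$ is faithfully flat, it descends projectivity and rank without ever needing to rigidify a choice of basis, and it isolates the finiteness input to the single final implication from projective-of-constant-rank to free. Both arguments suffice in the application, where $T=\TT_0^{\widehat\psi}(N,s)_K$ is a finite $K$-algebra and $V=H^1(\XX_1(N,s),K)^{\widehat\psi}$ is finite-dimensional, but your version makes the logical structure, and the needed hypothesis, transparent.
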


\begin{proof}
Let $G=\galois(L/K)$. By the Galois theory, since $V\otimes L$ is free of rank $n$ on $T\otimes L$, we have
$V\simeq(V\otimes L)^G\simeq((T\otimes L)^n)^G\simeq((T\otimes L)^G)^n\simeq T^n.$ \end{proof}

\begin{lemma}
Let $\TT_0^{\widehat\psi}(N,s)_\CC$ denote the algebra generated over $\CC$ by the operators $T_p$  for $p\not=\ell$  
acting on $H^1(\XX_1(N,s),\CC)^{\widehat\psi}$. Then $H^1(\XX_1(N,s),\CC)^{\widehat\psi}$ is free of rank 2 over $\TT^{\widehat\psi}_0(N,s)_\CC$.
\end{lemma}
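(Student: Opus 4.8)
The plan is to reduce the statement to a classical fact about modular forms via the Jacquet--Langlands correspondence, exactly in the spirit of the passage from Shimura curves to modular curves used elsewhere in the paper. First I would recall that, by the Eichler--Shimura-type isomorphism for Shimura curves, $H^1(\XX_1(N,s),\CC)^{\widehat\psi}$ decomposes under the action of the Hecke operators $T_p$ ($p\nmid\Delta s$, $p\neq\ell$) and the diamond operators as a direct sum indexed by the automorphic representations $\pi$ of $B_\A^\s$ whose infinite component is the discrete series of weight $2$, whose level is $V_0(N,s)$, and whose central/nebentypus data match $\widehat\psi$. Concretely, $H^1(\XX_1(N,s),\CC)^{\widehat\psi}\cong\bigoplus_\pi \pi^{V_1(N,s)}\otimes(\text{two-dimensional Betti piece})$, where the two-dimensionality at the archimedean place is the usual $H^1$ of a curve contributing $H^{1,0}\oplus H^{0,1}$.

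The key step is then to invoke Jacquet--Langlands: since $B$ is the indefinite quaternion algebra of discriminant $\Delta=\ell\Delta'$, the automorphic representations $\pi$ of $B_\A^\s$ appearing above correspond bijectively to cuspidal automorphic representations $\pi'$ of $GL_2(\A)$ that are discrete series (in fact supercuspidal of the prescribed type $\tau$, since $\chi$ has conductor $\ell$ forcing conductor $\ell^2$ at $\ell$) at each prime dividing $\Delta$, and have the matching tame level and nebentypus $\psi$ at the other places. Under this correspondence the eigenvalues of $T_p$ ($p\nmid\Delta s$) agree, so $\TT_0^{\widehat\psi}(N,s)_\CC$ is identified with the $\CC$-algebra generated by the $T_p$ acting on the corresponding space of classical newforms/oldforms in $S_2(\Gamma_0(N\Delta'\ell^2 s^2)\cap\dots,\psi)$ (with the appropriate level structure at $s$ from $K_s^1(s^2)$). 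For each such $\pi'$, local representation theory at the primes $p\mid N$ (where $\orho(I_p)\neq 1$ and the level is $K_p^1(N)$, i.e. conductor exactly $p$) and at $s$ shows the local invariant subspace $\pi_p'^{(K_p^1(N))}$ is one-dimensional; hence each Hecke eigensystem occurs in $H^1$ with multiplicity exactly $2$ (the archimedean factor), so $H^1(\XX_1(N,s),\CC)^{\widehat\psi}$ is free of rank $2$ over the (reduced, étale over $\CC$) Hecke algebra $\TT_0^{\widehat\psi}(N,s)_\CC$, which is a product of copies of $\CC$ indexed by those eigensystems.

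Assembling: write $\TT_0^{\widehat\psi}(N,s)_\CC=\prod_i \CC$, one factor per Hecke eigensystem occurring in $H^1(\XX_1(N,s),\CC)^{\widehat\psi}$; the $i$-th idempotent cuts out the $\pi$-isotypic part, which by the multiplicity-one computation above is a $2$-dimensional $\CC$-vector space, i.e. free of rank $2$ over that factor. Summing over $i$ gives freeness of rank $2$ over $\TT_0^{\widehat\psi}(N,s)_\CC$. The main obstacle — really the only subtle point — is verifying that the local components at primes $p\mid N$ and at $s$ contribute exactly a one-dimensional space of invariants under $K_p^1(N)$ (respectively $K_s^1(s^2)$), so that no extra multiplicity creeps in; this is where condition \eqref{cond2} ($\orho(I_p)\neq1$ for $p\mid N$), which forces the conductor of $\pi_p'$ to be exactly $p$ rather than $1$, is used, reducing the count to the standard newvector theory of Casselman. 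Everything else is a bookkeeping translation between quaternionic automorphic forms and classical modular forms.
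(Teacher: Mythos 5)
Your overall strategy is the same as the paper's: pass through the Jacquet--Langlands correspondence to classical cusp forms, then deduce the rank-$2$ freeness from a multiplicity-one statement together with the two-dimensional archimedean contribution to $H^1$. The paper merely cites Terracini's proof of Proposition~1.2 in \cite{Lea}, which sets up the injective, Hecke-equivariant map $JL:S_2(V_0(N,s),\widehat\psi)\to S_2(\Gamma_0(s^2\Delta')\cap\Gamma_1(N\ell^2))$ and then invokes the corresponding classical freeness result; your version is just a more explicitly automorphic rendering of that same argument.

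There is, however, a gap in your multiplicity-one step, and it matters because that step is the whole content of the lemma. You argue that the local invariant space $\pi_p'^{K_p^1(N)}$ is one-dimensional for $p\mid N$ by appealing to condition~(\ref{cond2}), i.e.\ $\orho(I_p)\neq1$. But that hypothesis constrains only those eigensystems whose residual representation is $\orho$; the lemma is a statement about the entire space $H^1(\XX_1(N,s),\CC)^{\widehat\psi}$, which contains eigensystems attached to forms with arbitrary mod-$\ell$ reduction and hence possibly unramified at $p\mid N$, giving a two-dimensional space of $K_p^0(N)$-type vectors of nebentypus $\psi_p$. Worse, at the auxiliary prime $s$ the level is $K_s^1(s^2)$, and the prime $s$ is chosen precisely so that the relevant $\pi$ is \emph{unramified} there; the invariants $\pi_s'^{K_s^1(s^2)}$ are then three-dimensional, not one-dimensional, so your count simply does not hold as stated. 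The correct resolution is that $\TT_0^{\widehat\psi}(N,s)_\CC$ contains the $U_p$ operators for $p\mid Ns$ (they are the $T_p$ with $p$ dividing the level), and these separate the degenerate old vectors: for an unramified $\pi_p'$ the $U_p$-eigenvalues on the old space are the distinct Satake parameters $\alpha_p,\beta_p$ (and at $s$ additionally $0$), so each \emph{Hecke eigensystem for the full algebra including $U_p$, $U_s$} still occurs with multiplicity one in $S_2(V_0(N,s),\widehat\psi)$, and hence with multiplicity two in $H^1$. This is the point Terracini's argument handles and the one you should make explicit; condition~(\ref{cond2}) is neither needed nor sufficient here. (You should also verify, or cite, the semisimplicity of the $U_p$-action over $\CC$ in this setting -- equivalently, that $\alpha_p\neq\beta_p$ -- since freeness would fail on a non-semisimple block.)
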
 

\noindent We observe that the proof of this lemma follows by the same analysis explained in (\cite{Lea}, Proof of proposition 1.2), defining an  homomorphism $$JL:S_2(V_1(N,s))\to S_2(\Gamma_0(s^2\Delta')\cap\Gamma_1(N\ell^2))$$  which is injective when restricted to the space $S_2(V_0(N,s),\widehat\psi)$ and equivariant by Hecke operators.

\section{The $\OO$-module $\mM$ }\label{tw}

Throughout this section, we largely mirror section 3 of \cite{Lea}, and we formulate a result that generalizes of a result of Terracini to the case of nontrivial nebentypus.\\
We set $\Delta=\Delta'\ell$; let $B$ be the indefinite quaternion algebra over $\QQ$ of discriminant $\Delta$. Let $R$ be a maximal order in $B$.\\
It is convenient to choose an auxiliary prime $s\not|M\ell$, $s>3$ such that no lift of $\orho$ can be  ramified at $s$; such prime exists by \cite{DT}, Lemma 2. We consider the group $\Phi_0=\Phi_0(N,s)$; it easy to verify that the group $\Phi_0$ has not elliptic elements (\cite{Lea}).

\noindent There exists an eigenform $\widetilde f\in S_2(\Gamma_0(Ms^2\ell^2),\psi)$ such that $\rho_f=\rho_{\widetilde f}$ and $T_s\widetilde f=0$. By the Jacquet-Langlands correspondence, the form $\widetilde f$ determines a character $\TT^{\widehat\psi}_0(N,s)\to k$ 
sending the operator $t$ in the class $\modulo\ \lambda$ of the eigenvalue of $t$ for $\widetilde f$. The kernel of this character is a maximal ideal  $\mm$ in $\TT^{\widehat\psi}_0(N,s)$. We define $\mM=H^1(\XX_1(N,s),\OO)^{\widehat\psi}_\mm.$
By combining Proposition 4.7 of \cite{DDT} with the Jacquet-Langlands correspondence we see that there is a natural isomorphism $\TT^\psi\simeq\TT_0^{\widehat\psi}(N,s)_\mm.$ Therefore, by Proposition \ref{ra}, $\mM\otimes_\OO K\  {\rm is\ free\ of\ rank\ 2\ over}\ \TT^\psi\otimes_\OO K.$

\noindent Let $\mathcal B$ denote the set of newforms $h$  of weight two, nebentypus $\psi$, level dividing $M\ell^2s^2$, special at primes $p$ dividing $\Delta'$, supercuspidal of type $\tau$ at $\ell$ and such that $\orho_h\sim\orho$. For a newforms $h\in\mathcal B$, we let $K_h$ denote the field over $\QQ_\ell$ generated by its coefficients $a_n(h)$, $\OO_h$ denote the ring of integers of $K_h$ and let $\lambda$ be a uniformizer of $\OO_h$. We let $A_h$ denote the subring of $\OO_{h}$ consisting of those elements whose reduction mod $\lambda$ is in $k$. We know that with respect to some basis, we have 
$\rho_h:G_\QQ\to GL_2(A_h)$ a deformation of $\orho$ satisfying our global deformation problem.\\ 
The universal property of $\mR$ furnishes a unique homomorphism $\pi_h:\mR\to A_h$  such that the composite $G_\QQ\to GL_2(\mR)\to GL_2(A_h)$ is equivalent to $\rho_h$. Since $\mR$ is topologically generated by the traces of $\rho^{\rm univ}(\Frob_p)$ for $p\not=\ell$, (see \cite{Ma}, \S1.8), we conclude that  the map 
$\mR\to\prod_{h\in\mathcal B}\OO_h$ such that $r\mapsto(\pi_h(r))_{h\in\mathcal B}$
has image $\TT^\psi$. Thus there is a surjective homomorphism of $\OO$-algebras $\Phi:\mR\to\TT^\psi.$ Our goal is to prove the following 

\begin{theorem}\label{goal}
\begin{itemize}
\item[a)] $\mR$ is complete intersection of dimension 1;
\item[b)] $\Phi:\mR\to\TT^\psi$ is an isomorphism;
\item[c)] $\mM$ is a free $\TT^\psi$-module of rank 2.
\end{itemize}
\end{theorem}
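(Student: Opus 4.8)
The plan is to deduce all three statements from the Taylor–Wiles–Diamond–Fujiwara criterion applied to the surjection $\Phi:\mR\to\TT^\psi$ together with the freeness over $K$ already established in Section~\ref{tw}. The strategy mirrors \cite{Lea} (and ultimately \cite{wi},\cite{D}), but the technical input at $\ell$ is supplied by Savitt's theorem \cite{savitt} rather than by the semistable case. First I would recall the general patching formalism: if $\{Q_n\}$ is a family of finite sets of Taylor–Wiles primes with $|Q_n|=r$ independent of $n$, each $q\in Q_n$ satisfying $q\equiv 1\pmod{\ell^n}$ and $\orho(\Frob_q)$ having distinct eigenvalues, and if the auxiliary deformation rings $\mathcal R^\psi_{Q_n}$ are generated by $r$ elements over $\OO$ while the patched module built from $\mM_{Q_n}=H^1(\XX_1(N,s)_{Q_n},\OO)^{\widehat\psi}_{\mm_{Q_n}}$ is free of the expected rank over the patched deformation ring, then in the limit $\mR$ is a complete intersection, flat over $\OO$ of relative dimension $0$ (hence of Krull dimension $1$), the map $\Phi$ is an isomorphism, and $\mM$ is free over $\TT^\psi$. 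Part~(c) then follows because $\mM$ is free over the patched power series ring modulo the patching ideal, and the rank is $2$ by comparison with $\mM\otimes_\OO K$ being free of rank $2$ over $\TT^\psi\otimes_\OO K$.

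The substantive steps are: (i) produce the Taylor–Wiles systems, i.e. show that for every $n$ there is a set $Q_n$ of the required cardinality $r=\dim_k H^1_{\mathcal F^*}(G_\QQ,\mathrm{ad}^0\orho)$ realizing the numerical coincidence between the presentation of $\mathcal R^\psi_{Q_n}$ and the Galois cohomological bound; (ii) check the local deformation conditions at the bad primes $p\mid N\Delta'$ are smooth of the right dimension, so that the global tangent space computation via the Greenberg–Wiles formula goes through; (iii) establish the key local computation at $\ell$, namely that the local deformation ring for "weakly of type $\tau$" with fixed determinant has relative dimension exactly matching the unramified-at-$\ell$ contribution used in the Euler characteristic formula — this is where Savitt's resolution of the Breuil–Mézard–type conjecture enters, and it is also what lets us drop the acceptability hypothesis of \cite{CDT}; and (iv) verify that the cohomology modules $\mM_{Q_n}$ satisfy the requisite compatibilities (Hecke-freeness over $\OO[(\ZZ/\ell^n\ZZ)^r]$ after localizing at $\mm_{Q_n}$, and the control $\mM_{Q_n}/\mathfrak a_n\simeq\mM$), which uses Proposition~\ref{ra} and the Jacquet–Langlands identification $\TT^\psi\simeq\TT_0^{\widehat\psi}(N,s)_\mm$ together with the fact that $\Phi_0(N,s)$ has no elliptic elements so the cohomology is concentrated in degree one and behaves like a module over a group ring.

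For step~(i) I would argue, following \cite{wi} and \cite{D}, that the existence of enough Taylor–Wiles primes is equivalent to the vanishing of a certain dual Selmer group after adding primes, which reduces by Chebotarev to the absolute irreducibility hypothesis \eqref{con1} (and the $\ell=3$ proviso \eqref{c3} to handle the exceptional image case), exactly as in Lemma-type arguments in \cite{DDT}. The crucial new ingredient is that, to run the patching at level $Q_n$, one needs the \emph{initial} tangent space $H^1_{\mathcal F}(G_\QQ,\mathrm{ad}^0\orho)$ to be at most one-dimensional — in fact exactly one-dimensional — and this is precisely the assertion flagged in the Introduction that "assuming the existence of a newform $f$ as above, the tangent space of the deformation functor has dimension one"; its proof rests on the local-at-$\ell$ deformation ring being formally smooth of the predicted dimension, which is Savitt's theorem. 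I would then feed this back: the numerical criterion of \cite{D},\cite{TW} says that a surjection of $\OO$-algebras $\mR\twoheadrightarrow\TT^\psi$ with $\TT^\psi$ finite flat over $\OO$, together with the existence of the patched data above, forces $\mR\xrightarrow{\sim}\TT^\psi$ to be an isomorphism of complete intersections, and simultaneously upgrades the $K$-freeness of $\mM$ to integral freeness of rank~$2$.

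The main obstacle is step~(iii): controlling the local deformation ring at $\ell$ for potentially Barsotti–Tate representations of the fixed supercuspidal type $\tau=\chi\oplus\chi^\sigma$. One must show this ring is $\OO$-flat of the correct dimension (equivalently, that its special fibre has the dimension predicted by the Breuil–Mézard multiplicity), so that the Euler characteristic formula yields the clean presentation $\mathcal R^\psi_{Q_n}\simeq\OO[[x_1,\dots,x_r]]/(\text{$r$ relations})$ needed for patching. This is exactly the content transported from \cite{savitt} (resolving Conjectures~1.2.2 and~1.2.3 of \cite{CDT}, extending \cite{BM} to the potentially crystalline Hodge–Tate weight $(0,1)$ case), and invoking it correctly — matching the type $\tau$, the regular character $\chi$, the condition $\chi|_{\ZZ_\ell^\s}=\psi_\ell|_{\ZZ_\ell^\s}$, and the hypothesis \eqref{end} which guarantees the relevant deformation functor is representable with scalar endomorphisms — is the heart of the argument. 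Everything else is a faithful adaptation of Terracini's treatment in \cite{Lea} to the nontrivial-nebentypus setting, tracking the character $\widehat\psi$ through the Shimura-curve cohomology and the Hochschild–Serre descent.
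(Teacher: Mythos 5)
Your proposal is correct and follows essentially the same route as the paper: both run the Taylor--Wiles patching in the Diamond--Fujiwara formulation by constructing a Taylor--Wiles system $(\mathcal R^\psi_Q,\mathcal M^\psi_Q)$ à la Terracini from Shimura-curve cohomology at auxiliary levels, both verify the numerical criterion via the Poitou--Tate local-term computation, and both invoke Savitt's theorem to get $\dim_k L_\ell=1$ (the key tangent-space input that replaces the acceptability hypothesis of \cite{CDT}). The only presentational difference is that the paper packages the verification as conditions (TWS1)--(TWS6) of \cite{Lea}, with the one nontrivial new check being the explicit Hecke-operator lemma for $T_q(x|_{\eta_q})$ in the twisted setting, whereas you describe the same verifications in the generic patching language; the substance is identical.
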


\subsection{Proof of theorem \ref{goal}}

\noindent In order to prove theorem \ref{goal}, we shall apply the Taylor-Wiles criterion in the version of Diamond and Fujiwara and we continue to follow section 3 of \cite{Lea} closely.\\
\noindent We shall prove the existence of a family $\mathcal Q$ of finite sets $Q$ of prime numbers, not dividing $M\ell$ and of a $\mR_Q$-module $\mM_Q$ for each $Q\in\mathcal Q$ such  that the system $(\mR_Q,\mM_Q)_{Q\in\mathcal Q}$ satisfies the  conditions (TWS1), (TWS2), (TWS3), (TWS4), (TWS5) and (TWS6) of \cite{Lea}.

\noindent If this conditions are satisfy for the family $(\mR_Q,\mM_Q)_{Q\in\mathcal Q}$, it will be called a {\bf Taylor-Wiles system}  for $(\mR,\mM)$. Then theorem \ref{goal} will follow from the isomorphism criterion (\cite{D}, theorem 2.1) developed by Wiles, Taylor-Wiles.\\
As in section 3.1 of \cite{Lea}, let $Q$ be a finite set of prime numbers not dividing $N\Delta$ and such that
\begin{itemize}
\item[(A)] $q\equiv 1\ \modulo\ \ell,\ \ \forall q\in Q$;
\item[(B)] if $q\in Q$, $\orho(\Frob_q)$ has distinct eigenvalues $\alpha_{1,q}$ and $\alpha_{2,q}$ contained in $k$.  
\end{itemize}
We will define  the modules $\MM_Q$.
If $q\in Q$ we put $$K'_q=\left\{\alpha\in R_q^\s\ |\ i_q(\alpha)\in \left(
\begin{array}
[c]{cc}%
H_q & *\\
q\ZZ_q & *
\end{array}
\right)\right\}$$ where $H_q$ is the subgroup of $(\ZZ/q\ZZ)^\s$ consisting of elements of order prime to $\ell$. By analogy with the definition of $V_Q$ in section 3.3 of \cite{Lea}, we define $$V_Q'(N,s)=\prod_{p\not|NQs}R_p^\s\s\prod_{p|N}K_p^0(N)\s K_s^1(s^2)\s\prod_{q|Q}K'_q$$
$$V_Q(N,s)=\prod_{p\not|NQs}R_p^\s\s\prod_{p|NQ}K_p^0(NQ)\s K_s^1(s^2)$$
$$\Phi_Q=(GL_2(\RR^+)\s V_Q(N,s))\cap B^\s,\ \ \Phi_Q'=(GL_2(\RR^+)\s V_Q'(N,s))\cap B^\s.$$

\noindent Then $\Phi_Q/\Phi_Q'\simeq\Delta_Q$ acts on $H^1(\Phi_Q',\OO(\widetilde\psi))$. Let $\TT_Q^{'\widehat\psi}(N,s)$ (resp. $\TT_Q^{\widehat\psi}(N,s)$) be the Hecke $\OO$-algebra generated by the Hecke operators $T_p$, $p\not=\ell$ and the diamond operators ( that are those Hecke operators coming from $\Delta_Q$) , acting on $H^1(\XX'_Q(N,s),\OO)^{\widehat\psi}$  (resp. $H^1(\XX_Q(N,s),\OO)^{\widehat\psi}$) where $\XX'_Q(N,s)$ (resp. $\XX_Q(N,s)$) is the Shimura curve associated to $V'_Q(N,s)$ (resp. $V_Q(N,s)$).\
There is a natural surjection $\sigma_Q:\TT_Q^{'\widehat\psi}(N,s)\to \TT_Q^{\widehat\psi}(N,s)$. Since the diamond operator $\langle n\rangle$ depends only on the image of $n$ in $\Delta_Q$, $\TT_Q^{'\widehat\psi}(N,s)$ is naturally an $\OO[\Delta_Q]$-algebra.
Let $\widetilde\alpha_{i,q}$ for $i=1,2$ be the two  roots in $\OO$ of the polinomial $X^2-a_q(f)X+q$ reducing to $\alpha_{i,q}$ for $i=1,2$. 
There is a unique eigenform $\widetilde f_Q\in S_2(\Gamma_0(MQs^2\ell^2),\widehat\psi)$ such that $\rho_{\widetilde f_Q}=\rho_f$, $a_s(\widetilde f_Q)=0$, $a_q(\widetilde f_Q)=\widetilde\alpha_{2,q}$ for $q|Q$, where $\widetilde\alpha_{2,q}$ is the lift of $\alpha_{2,q}$.\\
By the Jacquet-Langlands correspondence, the form $\widetilde f_Q$ determines a character $\theta_Q:\TT_Q^{\widehat\psi}(N,s)\to k$ sending $T_p$ to $a_p(\widetilde f_Q)\ \modulo\ \lambda$ and the diamond operators to 1. We define $\widetilde\mm_Q=\ker \theta_Q,$ $\mm_Q=\sigma^{-1}_Q(\widetilde\mm_Q)$, and $\MM_Q=H^1(\Phi'_Q,\OO(\ts))_{\mm_Q}.$
Then the map $\sigma_Q$ induce a surjective homomorphism $\TT_Q^{'\widehat\psi}(N,s)_{\mm_Q}\to \TT_Q^{\widehat\psi}(N,s)_{\widetilde\mm_Q}$ whose kernel contains $I_Q(\TT_Q^{'\widehat\psi}(N,s))_{\mm_Q}$. \\
If $\mathcal Q$ is a family of finite sets $Q$ of primes satisying conditions $(A)$ e $(B)$, conditions (TWS1) and (TWS2) holds, as proved in \cite{Lea}, proposition 3.2; by the same methods as in \S 6 of \cite{DDT1} and in \S 4, \S 5 of \cite{Des}, it is easy to prove that our system $(\mR_Q,\MM_Q)_{Q\in\mathcal Q}$ realize simultaneously conditions (TWS3), (TWS4), (TWS5).  \\
We put $\delta_q=\left(
\begin{array}
[c]{cc}%
q & 0\\
0 & 1
\end{array}
\right).$ 
Let $\eta_q$ be the id\`{e}le in $B_\A^\s$ defined by $\eta_{q,v}=1$ if $v\not|q$ and $\eta_{q,q}=i^{-1}_q\left(
\begin{array}
[c]{cc}%
q & 0\\
0 & 1
\end{array}
\right).$ By strong approximation, write $\eta_q=\delta_qg_\infty u$ with $\delta_q\in B^\s$, $g_\infty\in GL_2^+(\RR)$, $u\in V_{Q'}(N,s)$. We define a map 
\begin{eqnarray}
H^1(\Phi_Q,\OO(\ts))&\to& H^1(\Phi_{Q'},\OO(\ts))\\
x&\to& x|_{\eta_q}
\end{eqnarray}
as follows: let $\xi$ be a cocycle representing the cohomology class $x$ in $H^1(\Phi_{Q},\OO(\ts))$; then $x|_{\eta_q}
$ is represented by the cocycle $$\xi|_{\eta_q}(\gamma)=\widehat\psi(\delta_q)\cdot\xi(\delta_q\gamma\delta_q^{-1}).$$ 
We observe that if $\mathcal Q$ is a family of finite sets $Q$ of primes satisying conditions $(A)$ e $(B)$, then condition (TWS6) hold for the system $(\mR_Q,\MM_Q)_{Q\in\mathcal Q}$. The proof is essentially the same as in \cite{Lea}, using the following lemma:
\begin{lemma} 
$$T_p(x|_{\eta_q})=(T_p(x)|_{\eta_q})\ \ \ {\rm if}\ \ p\not|MQ'\ell,$$
\begin{equation}\label{r}
T_q(x|_{\eta_q})=q\widehat\psi(q)res_{\Phi_Q/\Phi_{Q'}}x,
\end{equation}
$$T_q(res_{\Phi_Q/\Phi_{Q'}}x)=res_{\Phi_Q/\Phi_{Q'}}(T_q(x))-x|_{\eta_q}.$$
\end{lemma}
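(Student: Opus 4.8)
The plan is to establish the three identities in the lemma by explicit cocycle computations, mirroring the analogous statements in Terracini's paper~\cite{Lea} but keeping track of the nebentypus character $\widehat\psi$, which is the only genuine novelty here. The setup is a pair of congruence subgroups $\Phi_{Q'}\subset\Phi_Q$ of index $q+1$ (the coset representatives being the usual ones for a $\Gamma_0(q)$-type inclusion), together with the Hecke operators $T_p$ acting on $H^1$ by the standard double-coset formula, and the degeneracy/twisting map $x\mapsto x|_{\eta_q}$ defined via the id\`ele $\eta_q$ and its strong-approximation decomposition $\eta_q=\delta_q g_\infty u$. First I would record, once and for all, the coset decomposition of $\Phi_Q\delta_q^{-1}\Phi_Q$ (equivalently of the Hecke correspondence attached to $q$) and the compatibility of the cocycle twist $\xi|_{\eta_q}(\gamma)=\widehat\psi(\delta_q)\xi(\delta_q\gamma\delta_q^{-1})$ with the $\Phi_Q$-action on the coefficient module $\OO(\widetilde\psi)$; note that the factor $\widehat\psi(\delta_q)$ is precisely what makes $\xi|_{\eta_q}$ a cocycle for $\Phi_{Q'}$ valued in $\OO(\widetilde\psi)$, since $\delta_q$ normalizes $\Phi_{Q'}$ only up to the character.

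For the first identity, $T_p(x|_{\eta_q})=T_p(x)|_{\eta_q}$ for $p\nmid MQ'\ell$, I would argue that away from $q$ the element $\eta_q$ is trivial, so $\delta_q$ commutes (on the relevant cosets) with the $T_p$-double coset; the computation is then formal, the only point being to check that the $\widehat\psi$-twist factors $\widehat\psi(\delta_q)$ and $\widehat\psi$ of the $T_p$-representatives multiply in the right order, which holds because $\widehat\psi$ is multiplicative and the representatives at $p$ and at $q$ live in disjoint places. For the second identity, $T_q(x|_{\eta_q})=q\widehat\psi(q)\,\mathrm{res}_{\Phi_Q/\Phi_{Q'}}x$, I would compute $T_q\circ(\cdot|_{\eta_q})$ directly: composing the twist by $\eta_q$ with the double coset at $q$ telescopes the $q+1$ terms, and the "diagonal" term $\delta_q\delta_q^{-1}=1$ survives multiplied by the scalar coming from the coefficient action, namely $\widehat\psi$ evaluated on $\mathrm{diag}(q,1)$ times the index factor $q$; identifying $\widehat\psi(\mathrm{diag}(q,1))$ with $\widehat\psi(q)$ (our convention for the central/diamond normalization) gives the stated scalar. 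The third identity, $T_q(\mathrm{res}\,x)=\mathrm{res}(T_q x)-x|_{\eta_q}$, is the Eichler–Shimura-type relation at $q$: restrict the $T_q$-double coset from $\Phi_Q$ to $\Phi_{Q'}$, compare with the $T_q$-operator computed downstairs, and the discrepancy is exactly the single coset represented by $\eta_q$, with the correct $\widehat\psi$-weight built in by the definition of the twist.

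The main obstacle I anticipate is bookkeeping rather than conceptual: making sure the character $\widehat\psi$ (equivalently $\widetilde\psi$ on $\Phi_0/\Phi_1$, pulled back to each $\Phi_Q$) is inserted consistently at every coset-representative manipulation, since $\delta_q$ does \emph{not} lie in $\Phi_{Q'}$ and the naive conjugation formulas acquire correction factors $\widehat\psi(\delta_q)$, $\widehat\psi(q)$ that must not be double-counted or dropped. Concretely, the delicate step is verifying that with the twist $\xi\mapsto\xi|_{\eta_q}$ as defined, the composite $T_q\circ(\cdot|_{\eta_q})$ really produces the scalar $q\widehat\psi(q)$ and not, say, $q$ or $q\widehat\psi(q)^{\pm2}$; this pins down the normalization of the diamond operators in $\TT_Q^{\widehat\psi}(N,s)$ and has to match the normalization used when $\widetilde f_Q$ was chosen with $a_q(\widetilde f_Q)=\widetilde\alpha_{2,q}$. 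Once these scalars are checked against a single worked example (e.g.\ restricting to the classical modular curve via the map $JL$ of Proposition~\ref{ra}, where the analogous identities are known), the three formulas follow, and condition (TWS6) for the system $(\mR_Q,\MM_Q)_{Q\in\mathcal Q}$ drops out exactly as in~\cite{Lea}.
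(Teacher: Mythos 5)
Your overall plan and your sketches of the first and third identities are consistent with the paper's framework. But the mechanism you propose for the second identity, equation (\ref{r}), is not what actually happens and would not produce the stated coefficient. You describe $T_q\circ(\cdot|_{\eta_q})$ as a telescoping sum of $q+1$ coset terms in which a single ``diagonal'' term $\delta_q\delta_q^{-1}=1$ survives with the scalar $q\widehat\psi(q)$. In fact the $T_q$ double coset on $H^1(\Phi_{Q'},\OO(\widetilde\psi))$ has exactly $q$ representatives (not $q+1$), and there is no cancellation at all: every one of the $q$ terms contributes identically.

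The key device the paper uses, and which your sketch omits, is to take the $T_q$ double coset with the transposed matrix $\widetilde\delta_q=\mathrm{diag}(1,q)$, so that $\delta_q\widetilde\delta_q=\mathrm{diag}(q,q)$ is \emph{central}. Writing $\Phi_{Q'}\widetilde\delta_q\Phi_{Q'}=\coprod_{i=1}^q\Phi_{Q'}\widetilde\delta_qh_i$ with $h_i\in\Phi_{Q'}$, the composite conjugation collapses:
$$\delta_q\bigl(\widetilde\delta_qh_i\gamma h_{j(i)}^{-1}\widetilde\delta_q^{-1}\bigr)\delta_q^{-1}=h_i\gamma h_{j(i)}^{-1}\in\Phi_{Q'},$$
so each of the $q$ terms in $\sum_{i=1}^q\widehat\psi(h_i\widetilde\delta_q)\,\xi|_{\eta_q}(\widetilde\delta_qh_i\gamma h_{j(i)}^{-1}\widetilde\delta_q^{-1})$ becomes $\widehat\psi(q)\widehat\psi(h_i)\xi(h_i\gamma h_{j(i)}^{-1})$, the scalar arising as $\widehat\psi(\widetilde\delta_q)\widehat\psi(\delta_q)=\widehat\psi(qI)$ rather than as $\widehat\psi$ of $\mathrm{diag}(q,1)$ alone. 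Summing $q$ copies of the restriction class (the cocycle relation absorbs $h_i,h_{j(i)}\in\Phi_{Q'}$) gives $q\widehat\psi(q)\,\mathrm{res}_{\Phi_Q/\Phi_{Q'}}x$. Without the centrality of $\delta_q\widetilde\delta_q$ the computation does not close; a genuine telescoping picture, if it closed at all, would yield a different and unstable coefficient depending on the choice of coset representatives.
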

\proof We prove (\ref{r}). We put  $\widetilde\delta_q=\left(
\begin{array}
[c]{cc}%
1 & 0\\
0 & q
\end{array}
\right),$ and we decompose the double coset $\Phi_{Q'}\widetilde\delta_q\Phi_{Q'}=\coprod_{i=1}^q\Phi_{Q'}\widetilde\delta_qh_i$ with $h_i\in\Phi_{Q'}$; if $\gamma\in\Phi_{Q'}$, then:
\begin{eqnarray}
T_q(\xi|_{\eta_q})(\gamma)&=&\sum_{i=1}^q\widehat\psi(h_i\widetilde\delta_q)\xi|_{\eta_q}(\widetilde\delta_qh_i\gamma h_{j(i)}^{-1}\widetilde\delta^{-1}_q)\nonumber\\
&=&\sum_{i=1}^q\widehat\psi(h_i)\widehat\psi(\widetilde\delta_q)\widehat\psi(\delta_q)\xi(\delta_q\widetilde\delta_qh_i\gamma h_{j(i)}^{-1}\widetilde\delta^{-1}_q\delta^{-1}_q)\nonumber\\
&=&\widehat\psi(q)\sum_{i=1}^q\widehat\psi(h_i)\xi(h_i\gamma h_{j(i)}^{-1})
\end{eqnarray}
where the latter sum holds since $\delta_q\widetilde\delta_q=\left(
\begin{array}
[c]{cc}%
q & 0\\
0 & q
\end{array}
\right)$. From the cocycle relations we have $T_q(x|_{\eta_q})=q\widehat\psi(q)res_{\Phi_Q/\Phi_{Q'}}x.$\qed

\noindent The conditions defining the functor $\mathcal F_Q$, characterize a global Galois deformation problem with fixed determinant (\cite{M}, \S 26). We let $ad^0\orho$ denote the subrepresentation of the adjoint representation of $\orho$ over the space of the trace-0-endomorphisms and we let $ad^0\orho(1)=\Hom(ad^0\orho,\mu_p)\simeq Symm^2(\orho)$, with the action of $G_p$ given by $g\varphi)(v)=g\varphi(g^{-1}v).$ Local deformation conditions a$_Q$), b), c), d)  allow one to define for each place $v$ of $\QQ$, a subgroup $L_v$ of $H^1(G_v, ad^0\orho)$, the tangent space of the deformation functor (see \cite{M}). We will describe the computation of the local terms of the dimension formula coming from the Poitou-Tate sequence:
 
\begin{itemize}
\item $\dim_k(H^0(G_\QQ,ad^0\orho)=\dim_k(H^0(G_\QQ,ad^0\orho(1))=0$, by the same argument as in \cite{Des} pp.441.
\item $\dim_kL_\ell=1$. In fact let $\RR^D_{\OO,\ell}$ be the local universal deformation ring associated to a local deformation problem of being weakly of type $\tau$ \cite{CDT}. Since, in dimension 2, potentially Barsotti-Tate is equivalent to potentially crystalline (hence potentially semi stable) of Hodge-Tate weight $(0,1)$ (see \cite{FM}, theorem C2), this allow us to apply Savitt's result (\cite{savitt}, theorem 6.22). Since under our hypothesis $\RR^D_{\OO,\ell}\not=0$, we deduce that there is an isomorphism $
\OO[[X]]\simeq\RR^D_{\OO,\ell}$.
\item  $\dim_kH^0(G_\ell,ad^0\orho)=0$, because of hypothesis (\ref{end})
\item  $\dim_kH^1(G_p/I_p,(ad^0\orho)^{I_p})-\dim_kH^0(G_p,ad^0\orho)=0$ for $p|N\Delta_1$\\
If we let $W=ad^0\orho$, this follows from the exact exact sequence 
\begin{displaymath}
0\to H^0(G_p,W)\to H^0(I_p,W)\stackrel{\scriptstyle\Frob_p-1}{\to}H^0(I_p,W)\to H^1(G_p/I_p,W^{I_p})\to 0.
\end{displaymath} 
\item $\dim_kL_p=1$ for $p|\Delta_2$, infact the following lemma holds:
\begin{lemma}\label{versal}
The versal defomation ring of the local defomation problem of satisfying the sp-condition is $\OO[[X,Y]]/(X,XY)=\OO[[Y]].$
\end{lemma}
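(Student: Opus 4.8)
The plan is to work out explicitly the deformation functor attached to the local sp-condition at a prime $p \mid \Delta_2$, i.e.\ at a prime where $\orho(I_p)=1$, so that $\orho_p$ is unramified. First I would record what "unramified with $\orho(I_p)=1$" forces: after a choice of basis $\orho_p$ is given by $\orho(\Frob_p)=\mathrm{diag}(\alpha,\beta)$ for suitable $\alpha,\beta \in \overline{\FF}_\ell^{\times}$, and since we are imposing the fixed determinant $\det = \epsilon\psi$, we have $\alpha\beta = \psi_p(p)$ and the sp-condition $\tr(\rho(F))^2 = \psi_p(p)(p+1)^2$ is exactly the condition that the (generalized) eigenvalues of $\rho(F)$ be in ratio $p:1$. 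So morally the versal ring should be $\OO[[X,Y]]$ (two parameters: one deforming the Frobenius eigenvalue, one deforming along the unramified-twist/ramification direction coming from $H^1(G_p/I_p, ad^0\orho)$ vs.\ the full $H^1(G_p, ad^0\orho)$) cut down by the single sp-relation, which degenerates the ramified deformations — hence the answer $\OO[[X,Y]]/(X,XY) \cong \OO[[Y]]$ of the statement.

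The key steps, in order: (1) Fix the standard presentation of $G_p/I_p^{\mathrm{wild}}$ or just use that tame deformations suffice here since $p \ne \ell$ and $\orho_p$ is unramified — the relevant deformations factor through the tame quotient, generated by $\Frob_p = F$ and a tame generator $t$ of $I_p$ with the relation $F t F^{-1} = t^p$. (2) Write a universal tame lift $\rho(t) = 1 + N + \cdots$, $\rho(F) = (\text{diag lift}) \cdot(1 + \cdots)$ over $\OO[[x_1,\dots]]$, impose $\det = \epsilon\psi$ to kill the scalar part, and read off the unrestricted (tame, fixed-determinant) versal ring — by the computation with $H^1(G_p, ad^0\orho)$, $H^2(G_p, ad^0\orho)$ and the local Euler characteristic this is a power series ring in $\dim H^1(G_p, ad^0\orho)$ variables; under hypothesis (\ref{rara})/(\ref{cond2}) being vacuous here (we are in the $\orho(I_p)=1$ case) one gets the two generators $X$ (a ramification parameter: the "$N$" entry) and $Y$ (an unramified parameter). (3) Impose the sp-condition $\tr(\rho(F))^2 - \psi_p(p)(p+1)^2 = 0$ as an ideal in this versal ring and simplify: the relation, expanded in the parameters, is equivalent to $X = 0$ \emph{or} (when $X \ne 0$, i.e.\ genuinely ramified) a further vanishing that forces $XY = 0$; more precisely one shows the ideal generated by that single trace relation equals $(X, XY) = (X)$ after the change of coordinates normalizing things so that $\tr(\rho(F))$ has the shape $(\text{eigenvalue deformation in } Y) + (\text{correction}) \cdot X$ and the sp-locus is $\{X=0\}$ scheme-theoretically with an embedded component along $XY$. (4) Conclude $\OO[[X,Y]]/(X,XY) = \OO[[X,Y]]/(X) = \OO[[Y]]$, and note the "versal" (not universal) qualifier is because $H^0(G_p, ad^0\orho) \ne 0$ here — when $\alpha \ne \beta$ there is a torus worth of extra automorphisms — so the functor is only pro-representable up to that.

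The computation in step (3) is the main obstacle: one has to be careful about exactly how the sp-condition carves out the deformation space, because a priori the condition $\tr(F)^2 = \psi_p(p)(p+1)^2$ looks like one equation cutting a hypersurface, but the claim $(X,XY)$ — not $(X)$ as ideals but $(X,XY)$, which happens to equal $(X)$ — signals that the natural defining equation of the sp-locus is $XY$ (or $X^2Y$, etc.), and only after including the obviously-satisfied relation $X=0$ on the unramified-modular locus does one get the stated presentation. I would handle this by following Terracini's analogous computation in \cite{Lea} (the "special at $p$" deformation ring there) essentially verbatim, adjusting constants for the nebentypus twist: the center $\psi_p(p)$ replaces $1$ throughout, and since $\psi$ has order prime to $\ell$ its local value is a unit not congruent to $1$ in general, which does not affect the ring-theoretic shape. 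The remaining steps are the routine cohomological bookkeeping already invoked elsewhere in this section (Euler characteristic formula, the inflation-restriction sequence for $G_p/I_p$), so I would state them briefly and cite \cite{M}, \cite{Des} for the local Galois cohomology inputs.
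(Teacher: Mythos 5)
Your high-level plan is the right one — reduce to Terracini's Lemma~2.1 in \cite{Lea} and then show the sp-condition kills one variable — but step (2) of your sketch misidentifies the starting ring, and this throws off the logic of step (3). The paper's argument is: the versal ring $\mR'_p$ for \emph{unrestricted} deformations of $\orho_p$ (fixed determinant, $p^2\not\equiv 1\ \modulo\ \ell$, no sp-condition yet) is already $\OO[[X,Y]]/(XY)$, not the power series ring $\OO[[X,Y]]$ that you claim in step (2). That obstruction $XY=0$ is structural to the unrestricted local problem (it comes from the tame relation $FtF^{-1}=t^p$: you cannot simultaneously turn on ramification and push the Frobenius eigenvalue ratio away from $p$), and it is precisely what Terracini's Lemma~2.1 computes. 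The sp-condition is then the much simpler observation that it is equivalent to $\varphi(X)=0$ for the classifying map $\varphi:\mR'_p\to A$, so the sp-versal ring is $\mR'_p/(X)=\OO[[X,Y]]/(XY,X)=\OO[[Y]]$. That is the reason the statement is written as $\OO[[X,Y]]/(X,XY)$ with the visibly redundant generator: $(XY)$ is the relation inherited from $\mR'_p$ and $(X)$ is what the sp-condition adds.

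In your version you try instead to extract the ideal $(X,XY)$ from a single trace relation inside a smooth $\OO[[X,Y]]$. That cannot be literally what is happening: a principal ideal equal to $(X,XY)=(X)$ is just $(X)$, and the phrase \lq\lq$\{X=0\}$ scheme-theoretically with an embedded component along $XY$\rq\rq\ is incoherent ($V(X)\subset\mathrm{Spec}\,\OO[[X,Y]]$ is integral). Your local Euler characteristic bookkeeping is also off for $p\ne\ell$ with $\orho_p$ unramified: $H^2(G_p,ad^0\orho)\ne 0$ in this regime, so the functor is not formally smooth and the versal ring genuinely has a relation. A smaller slip: with $\det\rho=\epsilon\psi$ the eigenvalues of $\rho(\Frob_p)$ satisfy $\alpha\beta=p\,\psi_p(p)$, not $\psi_p(p)$ (your subsequent remark that the sp-condition forces the eigenvalue ratio to be $p$ is nevertheless correct, since $(\alpha+\beta)^2/\alpha\beta=(p+1)^2/p$ gives $\alpha/\beta\in\{p,p^{-1}\}$). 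If you first establish $\mR'_p\simeq\OO[[X,Y]]/(XY)$ exactly as in \cite{Lea}, Lemma~2.1, and then observe that sp $\Leftrightarrow$ $X\mapsto 0$, the lemma falls out immediately and the confusion in step (3) disappears.
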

\item $\dim_kH^0(G_p,ad^0\orho)=1$, since the eigenvalues of $\orho(\Frob_p)$ are distinct.
\item $\dim_kH^1(G_q,ad^0\orho)=2$ if $q|Q$, in fact $H^1(G_q/I_q,W)=W/(\Frob_q-1)W$ has dimension 1, because in the base $\left\{\left(
\begin{array}
[c]{cc}%
1 & 0\\
0 & -1
\end{array}
\right), \left(
\begin{array}
[c]{cc}%
0 & 0\\
0 & 1
\end{array}
\right), \left(
\begin{array}
[c]{cc}%
0 & 0\\
0 & 1
\end{array}
\right)\right\}$ of $W$, $\Frob_q$ is the matrix 
$$\left(
\begin{array}
[c]{ccc}%
1 & 0 & 0\\
0 & \alpha_{1,q}\alpha_{2,q}^{-1} & 0\\
0 & 0 & \alpha_{1,q}^{-1}\alpha_{2,q}
\end{array}
\right)$$
 and $\alpha_{1,q}\not=\alpha_{2,q}$ by hypothesis. We observe that:
\begin{eqnarray}
H^1(I_q,W)^{G_q/I_q}&=&\{\alpha\in \Hom(\ZZ_q^\s,W)\ |\ (\Frob_q-1)\alpha=0\}\nonumber\\
&\simeq& W[\Frob_q-1]\nonumber
\end{eqnarray}
is again one-dimensional, and $H^2(G_q/I_q,W)=0$ since  $G_q/I_q\simeq\widehat{\ZZ}.$ The desidered result follows from the inflation-restriction exact sequence.
\item $\dim_kH^0(G_q,ad^0\orho)=1$ if $q|Q$, since the eigenvalues of $\Frob_q$ are $1, \alpha_{1,q}^2, \alpha_{2,q}^2$ and $\alpha_{1,q}^2, \alpha_{2,q}^2\not=1$ for hypothesis.
\item $\dim_kH^1(G_\infty,ad^0\orho)=0$, since $|G_\infty|=2\not=\ell$.
\item $\dim_kH^0(G_\infty,ad^0\orho)=1$, since the eigenvalues of complex conjugation on $ad^0\orho$ are $\{1,-1,-1\}.$
\end{itemize}

\proof[lemma \ref{versal}]

\noindent We first observe that it is possible to  characterize the deformations of $\orho_p$ in the unramified case, if $p^2\not\equiv 1\ \modulo\ \ell$ and, as in Lemma 2.1 in \cite{Lea}, it it is easy to prove that the versal deformation ring $\mR'_p$ is generated by two elements $X,Y$ such that $XY=0$. \\
It is immediate to see that the sp-condition is equivalent to require that every homomorphism $\varphi:\mR'_p\to A$ associated to the deformation $\rho$ of $\orho_{G_p}$ over a $\OO$-algebra $A\in\mathcal C_\OO$, has $\varphi(X)=0$.\qed

\noindent The dimension formula allow us to obtain the following identity: 
\begin{eqnarray}\label{f}
\dim_kSel_Q(ad^0\orho)-\dim_kSel^*_Q(ad^0\orho(1))=|Q|
\end{eqnarray}
and, since the minimal number of topological generators of $\mR_Q$ is equal to $\dim_kSel_Q(ad^0\orho)$,
 we obtain that the $\OO$-algebra $\mR_Q$ can be generated topologically by $|Q|+\dim_kSel^*_Q(ad^0\orho(1))$ elements.
Applying the same arguments as in $\cite{DDT}$ the proof of theorem \ref{goal} follows. 

\section{Quaternionic congruence module}

In this section we will keep the notations as in the previous sections.\\
As remarked in section \ref{shi}, there is an injective map $$JL:S_2(V_0(N),\widehat\psi)\to S_2(\Gamma_0(\Delta')\cap\Gamma_1(N\ell^2))$$ where $V_0(N)=\prod_{p\not|N}R_p^\s\s\prod_{p|N}K_p^0(N)$ is an open compact subgroup of $B_\A^{\s,\infty}$.  We put $V_{\widehat\psi}=JL(S_2(V_0(N),\widehat\psi))$ the subset of $S_2(\Gamma_0(\Delta')\cap\Gamma_1(N\ell^2))$ generated by thouse new eigenforms with nebentypus $\psi$ which are supercuspidal of type $\tau$ at $\ell$ and special at primes dividing $\Delta'$. Let $K\subset\overline\QQ_\ell$ be a finite extension containing $\QQ_{\ell^2}$; we consider $f\in S_2(\Gamma_0(N\Delta'\ell^2),\psi)$ a newform, supercuspidal of type $\tau$ at $\ell$ and special at primes dividing $\Delta'$, and let $X$ be the  subspace of $V_{\widehat\psi}(K)$ spanned by $f$. We remark, that there is an isomorphism between the $K$-algebra $\TT_0^{\widehat\psi}(K)=\TT_0^{\widehat\psi}(N)\otimes_\OO K$ generated over $K$ by the operators $T_p$, $p\not=\ell$ acting on $H^1(\XX_1(N),K)^{\widehat\psi}$ and the Hecke algebra generated by all the Hecke operators acting on $V_{\widehat\psi}(K)$. Thus $V_{\widehat\psi}=X\oplus Y$ where $Y$ is the orthogonal complement of $X$ with respect to the Petersson product over $V_{\widehat\psi}$ and there is an isomorphism $$\TT_0^{\widehat\psi}(K)\simeq\TT_{0}^{\widehat\psi}(K)_X\oplus\TT_{0}^{\widehat\psi}(K)_Y$$ where $\TT_{0}^{\widehat\psi}(K)_X=\TT_0^{\widehat\psi}(N)_X\otimes K$ and $\TT_{0}^{\widehat\psi}(K)_Y=\TT_0^{\widehat\psi}(N)_Y\otimes K $ are the $K$ algebras generated by the Hecke operators acting on $X$ and $Y$ respectively.\\
As in classical case, it is possible to define the  quaternionic congruence module for $f$ and, by the Jacquet-Langlands correspondence, it is easy to prove that $\widetilde M^{\rm quat}=\OO/(\lambda^n)$ where $n$ is the smallest integer such that $\lambda^ne_f\in\TT_0^{\widehat\psi}(N)$ where $e_f$ is the projector onto the coordinate corresponding to $f$. There are the isomorphisms  
$$\widetilde M^{\rm quat}\simeq\frac{\TT_0^{\widehat\psi}(N)_X\oplus\TT_0^{\widehat\psi}(N)_Y}{\TT_0^{\widehat\psi}(N)}\simeq\frac{e_f\TT_0^{\widehat\psi}(N)}{e_f\TT_0^{\widehat\psi}(N)\cap\TT_0^{\widehat\psi}(N)}$$
where the first one is an isomorphism of $\OO$-modules and the second one is obtained considernig the projection map of $\TT_0^{\widehat\psi}(N)_X\oplus\TT_0^{\widehat\psi}(N)_Y$ onto the first component.\\
Now, let $\mm$ be the maximal ideal of $\TT_0^{\widehat\psi}(N)$ definend in section \ref{tw}, since $e_f\TT_0^{\widehat\psi}(N)_\mm=e_f\TT_0^{\widehat\psi}(N)$ then, by the results in the previous sections  $$\widetilde M^{\rm quat}=\OO/(\lambda^n)=\frac{e_f\TT^\psi}{e_f\TT^\psi\cap\TT^\psi}=(\widetilde L^{\rm quat})^2$$  where $\widetilde L^{\rm quat}$ is the  quaternionic cohomological congruence module for $f$ $$\widetilde L^{\rm quat}=\frac{e_fH^1(\XX_1(N),\OO)^{\widehat\psi}}{e_fH^1(\XX_1(N),\OO)^{\widehat\psi}\cap H^1(\XX_1(N),\OO)^{\widehat\psi}}.$$

\section{A generalization of the Conrad, Diamond and Taylor's result using Savitt's theorem}\label{gen}

In \cite{CDT}, Conrad, Diamond and Taylor assume that the type $\tau$ is strongly acceptable for $\orho|_{G_\ell}$ and they consider the global Galois deformation problem of being of {\bf type $(S,\tau)$ }, where $S$ is a set of rational primes which does not contain $\ell$. Savitt's theorem allows to suppress the assumption of acceptability in the definition of strong acceptability and their result, theorem 5.4.2., still follows. 
They first suppose that $S=\emptyset$ and they prove their result using the improvement on the method of Taylor and Wiles \cite{taywi} found by Diamond \cite{D} and Fujiwara \cite{Fu}, then they prove their result for an arbitrary $S$ by induction on $S$, using Ihara's Lemma.\\
In particular, if $S$ is a set of rational prime not dividing $N\Delta'\ell$, we consider a newform $f$ of weight 2, level dividing $SM\ell^2$, with nebentypus $\psi$ (not trivial), supercuspidal of type $\tau=\chi\oplus\chi^\sigma$ at $\ell$  and such that $\orho_f=\orho$ satisfies the conditions (\ref{con1}),(\ref{cond2}),(\ref{end}) and (\ref{c3}) of section \ref{de}. As a general hypothesis, we assume that $f$ occurs with type $\tau$ and minimal level, that is $\orho_f$ is ramified at every prime in $\Delta'$.\\
We consider   deformations of type $(S,\tau)$ of $\orho_f$ unramified outside the level of $f$ and  such that $\det(\rho)=\epsilon\psi$; we will call this deformation problem of type $(S,\tau,\psi)$. Then Savitt'theorem assure that the tangent space of the deformation functor at $\ell$ is still one-dimensional and so it is possible to go on with the same construction as in \cite{CDT}.
Let $\mathcal R_S^{\rm mod,\psi}$ be classical type $(S,\tau,\psi)$ universal deformation ring which pa\-ra\-me\-trizes representations of type $(S,\tau,\psi)$ with residual representation $\orho$ and let $\TT_S^{\rm mod,\psi}$ be the classical Hecke algebra acting on the space of the modular forms of type $(S,\tau,\psi)$. If we denote by $\mathcal M_S^{\rm mod}$ the cohomological module defined in \S 5.3 of \cite{CDT}, which is essentially the \lq\lq$\tau$-part\rq\rq  of the first cohomology group of a modular curve of level depending on $S$, let $\mathcal M^{\rm mod,\psi}_S$ be the $\psi$-part of $\mathcal M_S^{\rm mod}$. Then the following proposition follows:
\begin{proposition}\label{gcdt}
  The map $$\Phi_S^{\rm mod,\psi}:\mathcal R_S^{\rm mod,\psi}\to\TT_S^{\rm mod,\psi}$$ is a complete intersection isomorphism and $\mathcal M^{\rm mod,\psi}_S$ is a free $\TT_S^{\rm mod,\psi}$-module of rank 2. 
\end{proposition}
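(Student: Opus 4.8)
The plan is to reduce Proposition~\ref{gcdt} to the Conrad--Diamond--Taylor machinery of \cite{CDT}, \S5, with two modifications: first, replacing their hypothesis that $\tau$ be strongly acceptable for $\orho|_{G_\ell}$ by the weaker input supplied by Savitt's theorem \cite{savitt}; second, keeping track of the nebentypus $\psi$ throughout, i.e.\ working systematically with the $\psi$-isotypic (equivalently $\widehat\psi$-isotypic) parts of all the cohomology groups and Hecke algebras, exactly as was done in Sections~\ref{shi}--\ref{tw} above for the quaternionic side. Since $\psi$ has order prime to $\ell$, this $\psi$-part is a direct summand, so all the relevant freeness and complete-intersection statements can be read off from the corresponding statements for the full cohomology without loss.

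First I would treat the case $S=\emptyset$. Here one sets up the type $(\emptyset,\tau,\psi)$ deformation problem of $\orho$ with fixed determinant $\epsilon\psi$, which is representable; call its universal ring $\mathcal R^{\rm mod,\psi}_\emptyset$. The key local input at $\ell$ is that the local framed deformation ring $\RR^D_{\OO,\ell}$ parametrizing lifts weakly of type $\tau$ satisfies $\OO[[X]]\simeq\RR^D_{\OO,\ell}$; this is precisely the computation already carried out in the bulleted list of Section~\ref{tw}, using that potentially Barsotti--Tate in dimension~$2$ equals potentially crystalline of Hodge--Tate weight $(0,1)$ (\cite{FM}, Theorem~C2) together with Savitt's Theorem~6.22 of \cite{savitt} and the nonvanishing hypothesis $\RR^D_{\OO,\ell}\neq0$ coming from the assumed existence of $f$. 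Given this, the tangent space of the global functor at $\ell$ is one-dimensional, so the Selmer-group dimension count goes through verbatim as in \cite{CDT} (and as in Section~\ref{tw}), and one runs the Taylor--Wiles--Diamond--Fujiwara patching argument with the patching module taken to be $\mathcal M^{\rm mod,\psi}_\emptyset$, the $\psi$-part of the cohomology of the relevant modular curve. The isomorphism criterion of \cite{D}, Theorem~2.1, then yields that $\Phi^{\rm mod,\psi}_\emptyset$ is a complete-intersection isomorphism and that $\mathcal M^{\rm mod,\psi}_\emptyset$ is free of rank~$2$ over $\TT^{\rm mod,\psi}_\emptyset$; here minimality of the level of $f$ is used so that the modular-curve cohomology module really does compute the right Hecke algebra, as in \S5.3 of \cite{CDT}.

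Next I would pass to general $S$ by induction on the number of primes in $S$, adding one prime $q\notin N\Delta'\ell$ at a time. The inductive step is the standard level-raising argument of \cite{CDT}, \S5.4: one compares $\mathcal M^{\rm mod,\psi}$ at level with $q$ and at level without $q$ via the two degeneracy maps, and the exactness one needs to propagate both the complete-intersection isomorphism and the rank-$2$ freeness across the step is supplied by Ihara's Lemma for the cohomology of modular curves (\cite{CDT}, and originally \cite{DT}, \cite{DTi} on the classical side). Since everything is happening inside the $\psi$-part, which is a direct summand, the Ihara input applies to it as well. Formally this is the statement that $\alpha\otimes_\OO k$ is injective for the modular (non-quaternionic) degeneracy map, which is exactly the theorem available in the case $\ell\nmid\Delta$.

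The main obstacle is not really in the induction but at $\ell$: one must be careful that Savitt's result is being applied in exactly the generality needed to drop the acceptability hypothesis, i.e.\ that the framed local deformation ring for lifts weakly of type $\tau=\chi\oplus\chi^\sigma$ is formally smooth of the expected dimension in the potentially crystalline, non-acceptable case, and that this suffices to keep the Euler-characteristic/Selmer computation giving a one-dimensional tangent space. A secondary point requiring care is bookkeeping of the nebentypus: one must check that the type $(S,\tau,\psi)$ modular forms and their Hecke algebra match the $\psi$-part $\mathcal M^{\rm mod,\psi}_S$ of the module of \S5.3 of \cite{CDT} on the nose, so that the patched freeness statement transfers. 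Both points are handled by the same arguments already used above for the quaternionic module $\mM$, so the proof is a matter of transcribing Section~\ref{tw} with $\XX_i(N,s)$ replaced by the appropriate modular curves and with $S$-level structure inserted, and then invoking \cite{CDT} for the level-raising induction.
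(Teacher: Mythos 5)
Your proposal matches the paper's approach exactly: the paper proves Proposition~\ref{gcdt} precisely by invoking the machinery of \cite{CDT} (Taylor--Wiles--Diamond--Fujiwara patching for $S=\emptyset$, then induction on $S$ via Ihara's Lemma for modular curves), with Savitt's theorem \cite{savitt} substituting for the acceptability hypothesis so that the local framed deformation ring at $\ell$ is $\OO[[X]]$ and the tangent space is one-dimensional, and with the nebentypus $\psi$ tracked through $\psi$-isotypic direct summands. The paper in fact gives less detail than you do, presenting the proposition as an immediate consequence of the preceding discussion, so your write-up is a faithful (and somewhat fuller) rendering of the intended argument.
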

\noindent In particular we observe that:
\begin{lemma}
There is an isomorphism of $\TT^\psi$-modules between $H^1(\XX_1(N),\OO)^{\widehat\psi}_\mm$ and $\mathcal M^{{\rm mod},\psi}_\emptyset$ where $\XX_1(N)$ is the Shimura curve associated to $V_1(N)=\prod_{p\not|N\ell}R_p^\s\prod_{p|N}K_p^1(N)\s(1+u_\ell R_\ell)$.
\end{lemma}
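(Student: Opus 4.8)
The plan is to reduce the statement to the assertion that $H^1(\XX_1(N),\OO)^{\widehat\psi}_\mm$ and $\mathcal M_\emptyset^{{\rm mod},\psi}$ are both free of rank $2$ over one and the same ring $\TT^\psi$: once this is established the two modules are each isomorphic to $(\TT^\psi)^{2}$, hence to each other, and the resulting bijection is automatically Hecke-equivariant, since being Hecke-equivariant just means being $\TT^\psi$-linear. Thus the content lies in three points, which I would carry out in order: identifying the Hecke algebra acting on $\mathcal M_\emptyset^{{\rm mod},\psi}$ with $\TT^\psi$; transferring the freeness of $\mM$ from the auxiliary level $V_1(N,s)$ down to the level $V_1(N)$; and checking that the two $\TT^\psi$-module structures so obtained coincide.

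For the first point I would use minimality of the level: since $f$ occurs with minimal level, $\orho(I_p)\not=1$ for every $p|\Delta'$, so $\Delta_1=\Delta'$, $\Delta_2=1$, condition c) of Definition \ref{def} is vacuous, and the conditions describing $\mathcal F=\mathcal F_0$ become exactly those defining the deformation problem of type $(\emptyset,\tau,\psi)$ of \S\ref{gen}; hence $\mR=\mathcal R_\emptyset^{{\rm mod},\psi}$. By Theorem \ref{goal} the surjection $\mR\to\TT^\psi$ is a complete intersection isomorphism, and by Proposition \ref{gcdt} so is $\mathcal R_\emptyset^{{\rm mod},\psi}\to\TT_\emptyset^{{\rm mod},\psi}$; composing, $\TT^\psi$ and $\TT_\emptyset^{{\rm mod},\psi}$ are canonically identified, compatibly with the operators $T_p$ for $p\not|M\ell$. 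Under this identification Proposition \ref{gcdt} says precisely that $\mathcal M_\emptyset^{{\rm mod},\psi}$ is free of rank $2$ over $\TT^\psi$.

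For the second point, I would recall that $s$ was chosen (Lemma 2 of \cite{DT}) so that no lift of $\orho$ is ramified at $s$, and run the argument of \S\ref{tw} that yields $\TT^\psi\simeq\TT_0^{\widehat\psi}(N,s)_\mm$ (Proposition 4.7 of \cite{DDT} together with the Jacquet--Langlands correspondence, implemented by the $\widetilde f$-stabilization at $s$) at the level of cohomology rather than of Hecke algebras; this gives a $\TT^\psi$-linear isomorphism $\mM=H^1(\XX_1(N,s),\OO)^{\widehat\psi}_\mm\simeq H^1(\XX_1(N),\OO)^{\widehat\psi}_\mm$, so by Theorem \ref{goal}(c) the latter is free of rank $2$ over $\TT^\psi$. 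Together with the previous paragraph this already yields an isomorphism of $\TT^\psi$-modules; to see that it is the expected one, I would note that the Matsushima--Murakami--Shimura isomorphism, Eichler--Shimura, and Jacquet--Langlands identify both $H^1(\XX_1(N),\OO)^{\widehat\psi}_\mm\otimes_\OO K$ and $\mathcal M_\emptyset^{{\rm mod},\psi}\otimes_\OO K$, $\TT^\psi$-equivariantly, with the span of the newforms in $\mathcal B$ (special at the primes of $\Delta'$ and supercuspidal of type $\tau$ at $\ell$, hence transferring to $B^\s$, with $B$ ramified exactly at $\ell\Delta'$), each eigensystem occurring with multiplicity $2$; both modules being $\lambda$-torsion-free, the $K$-linear identification descends integrally.

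The step I expect to be the main obstacle is the removal of the auxiliary prime $s$. Although the choice of $s$ kills all ramified lifts of $\orho$, and hence the part of the $\mm$-localization that is new at $s$, one still has to check that the deep level structure $K^1_s(s^2)$ contributes, after localization at $\mm$ and passage to the $\widetilde f$-stabilization, a single copy of the level-$N$ cohomology rather than a higher-multiplicity oldspace at $s$; this uses the genericity of $\orho$ at $s$ (the eigenvalues of $\orho(\Frob_s)$ are distinct and have ratio different from $s^{\pm1}$), and is the analogue for the Shimura curves $\XX_i(N,s)$ of Proposition 4.7 of \cite{DDT}. Granting it, the lemma follows from Theorem \ref{goal} and Proposition \ref{gcdt} as described.
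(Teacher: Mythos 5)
Your argument is essentially the same as the paper's. The paper's proof is a three-line observation: under minimality $\mR\simeq\mathcal R^{{\rm mod},\psi}_\emptyset$; Theorem \ref{goal} and Proposition \ref{gcdt} then give a chain of isomorphisms identifying $\TT^\psi$ with $\TT^{{\rm mod},\psi}_\emptyset$; and since each of the two cohomological modules is free of rank $2$ over its Hecke algebra, they are isomorphic as $\TT^\psi$-modules. That is exactly the skeleton of your first paragraph.

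Where you go further is in the second and third points, and the paper is actually silently imprecise there. The paper's proof literally concludes $\mM\simeq\mathcal M^{{\rm mod},\psi}_\emptyset$, where $\mM=H^1(\XX_1(N,s),\OO)^{\widehat\psi}_\mm$ is the module \emph{with} the auxiliary prime $s$; but the lemma is stated for $H^1(\XX_1(N),\OO)^{\widehat\psi}_\mm$ \emph{without} $s$. So the step you flag as ``the main obstacle'' — passing from level $(N,s)$ to level $N$, using that no lift of $\orho$ ramifies at $s$ together with a quaternionic analogue of Proposition 4.7 of \cite{DDT} — is not addressed in the paper at all; the paper simply treats the two as interchangeable. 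You are right that this requires an argument (the $s$ was introduced precisely to kill elliptic elements of $\Phi_0(N)$, so one cannot just drop it for free), and your sketch of how to carry it out is reasonable. In short: same approach as the paper, with an extra correct observation that the paper elides the auxiliary-prime removal.
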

\proof
We observe that if $f$ occurs with type $\tau$ and minimal level $\mR\simeq\mathcal R^{{\rm mod},\psi}_\emptyset$. By theorem \ref{goal} and proposition \ref{gcdt}, there is an isomorphism between the Hecke algebras $\TT^\psi\simeq\TT^{{\rm mod},\psi}_\emptyset$ thus $\mM\simeq \mathcal M^{{\rm mod},\psi}_\emptyset$ as $\TT^\psi$-modules.\qed

\noindent We will describe some consequences of this result.

\section{Congruence ideals}\label{con}

In this section we generalize the results about congruence ideals of Terracini \cite{Lea}, considering modular forms with nontrivial nebentypus.\\
Let $\Delta_1$ be a set of primes, disjoint from $\ell$. By an abuse of notation, we shall sometimes denote by $\Delta_1$ also the product of the prime in this set.\\
As before, let $f$ be a newform in $S_2(\Gamma_0(N\Delta_1\ell^2),\psi))$,  supercuspidal of type $\tau$ at $\ell$ and as a general hypothesis we assume that the residual representation $\orho$, associated to $f$ occurs with type $\tau$ and minimal level. \\
We observe that if  $\orho_\ell$  has the form as at pp.525 of \cite{CDT}, then $f$ satisfies the above hypothesis.\\ 
We assume that the character $\psi$ satisfies the condition as in the section \ref{de}, that $\orho$ is absolutely irreducible and that $\orho_\ell$ has trivial centralizer.\\
Let $\Delta_2$ be a finite set of primes $p$, not dividing $\Delta_1\ell$ such that $p^2\not\equiv 1\ \modulo\ \ell$ and $\tr(\orho(\Frob_p))^2\equiv\psi(p)(p+1)^2\ \modulo\ \ell$. We let $\mathcal B_{\Delta_2}$ denote the set of newforms $h$ of weight 2, character $\psi$ and level dividing $N\Delta_1\Delta_2\ell$ which are special at $\Delta_1$, supercuspidal of type $\chi$ at $\ell$ and such that $\orho_h=\orho$. We choose an $\ell$-adic ring $\OO$ with residue field $k$, sufficiently large, so that every representation $\rho_h$ for $h\in\mathcal B_{\Delta_2}$ is defined over $\OO$ and $Im(\psi)\subseteq\OO$. For every pair of disjoint subset $S_1, S_2$ of $\Delta_2$ we denote by $\mathcal R^\psi_{S_1,S_2}$ the universal solution over $\OO$ for the deformation problem of $\orho$ consisting of the deformations $\rho$ satisfying:
\begin{itemize}
\item[a)] $\rho$ is unramified outside $N\Delta_1 S_1S_2\ell$;
\item[b)] if $p|\Delta_1N$ then $\rho(I_p)=\orho(I_p)$;
\item[c)] if $p|S_2$ then $\rho_p$ satisfies the sp-condition;
\item[d)] $\rho_\ell$ is weakly of type $\tau$;
\item[e)] $\det(\rho)=\epsilon\psi$ where $\epsilon:G_\QQ\to\ZZ^\s_\ell$ is the cyclotomic character. 
\end{itemize}
Let $\mathcal B_{S_1,S_2}$ be the set of newforms in $\mathcal B_{\Delta_2}$ of level dividing $N\Delta_1S_1S_2\ell$ which are special at $S_2$ and let $\TT^\psi_{S_1,S_2}$ be the sub-$\OO$-algebra of $\prod_{h\in\mathcal B_{S_1,S_2}}\OO$ generated by the elements $\widetilde T_p=(a(h))_{h\in\mathcal B_{S_1,S_2}}$ for $p$ not in $\Delta_1\cup S_1\cup S_2\cup\{\ell\}$. Since $\mathcal R^\psi_{S_1,S_2}$ is generated by traces, we know that there exist a surjective homomorphism of $\OO$-algebras $\mathcal R^\psi_{S_1,S_2}\to\TT^\psi_{S_1,S_2}$. Moreover by the results obtained in section \ref{gen}, we have that $\mathcal R^\psi_{S_1,\emptyset}\to\TT^\psi_{S_1,\emptyset}$ is an isomorphism of complete intersections, for any subset $S_1$ of $\Delta_2$.\\
If $\Delta_1\not=1$ then each $\TT^\psi_{\emptyset,S_2}$ acts on a local component of the cohomology of a suitable Shimura curve, obtained by taking an indefinite quaternion algebra of discriminant $S_2\ell$ or $S_2\ell p$ for a prime $p$ in $\Delta_1$. Therefore, theorem \ref{goal} gives the following:
\begin{corollary}
Suppose that $\Delta_1\not=1$ and that $\mathcal B_{\emptyset,S_2}\not=\emptyset;$ then the map $$\mathcal R^\psi_{\emptyset,S_2}\to\TT^\psi_{\emptyset,S_2}$$ is an isomorphism of complete intersections.
\end{corollary}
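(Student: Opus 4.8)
The plan is to reduce the corollary to Theorem \ref{goal} by exhibiting, for each admissible pair $(\emptyset, S_2)$ with $\Delta_1\neq 1$, a Shimura curve whose cohomology realizes the Hecke algebra $\TT^\psi_{\emptyset,S_2}$ in exactly the form treated in Sections \ref{shi}--\ref{tw}. First I would fix a prime $p_0\mid\Delta_1$ and set $\Delta'' = S_2\ell$ if the number of primes dividing $S_2\ell$ is odd, and $\Delta'' = S_2\ell p_0$ otherwise; in either case $\Delta''$ is a product of an odd number of primes distinct from one another, so there is an indefinite quaternion algebra $B''$ over $\QQ$ of discriminant $\Delta''$, and one takes $N'' = N\Delta_1/p_0$ or $N'' = N\Delta_1$ accordingly, together with the auxiliary prime $s$ as in Section \ref{tw}. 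The primes in $S_2$ now divide the discriminant of $B''$, so by Jacquet--Langlands the newforms in $\mathcal B_{\emptyset,S_2}$ (which are special at $S_2$ and special at the primes of $\Delta_1$, supercuspidal of type $\tau$ at $\ell$) correspond precisely to quaternionic eigenforms for $B''$ of the appropriate level and nebentypus $\widehat\psi$.

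Next I would verify that the residual representation $\orho$, under the running hypotheses of Section \ref{con} (absolutely irreducible, $\orho_\ell$ with trivial centralizer, $\psi$ of order prime to $\ell$, plus \eqref{cond2}, \eqref{rara} and \eqref{c3}), satisfies all the hypotheses imposed on $\orho$ in Section \ref{de} relative to the new data $(N'',\Delta'',\ell)$: condition \eqref{cond2} holds for $p\mid N''$ because those primes divide $N\Delta_1$ and $\orho(I_p)\neq 1$ there by b), condition \eqref{rara} is the requirement that $\orho(I_p)\neq 1$ for $p\mid\Delta''$ with $p^2\equiv 1\pmod\ell$, which is automatic for $p\in S_2$ since by construction $p^2\not\equiv 1\pmod\ell$, and which holds for $p_0$ since $p_0\mid\Delta_1$. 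One also checks that the sp-condition imposed at the primes of $S_2$ in the deformation problem defining $\mathcal R^\psi_{\emptyset,S_2}$ matches the local condition b) (the $\orho(I_p)\simeq\rho(I_p)$ condition) that arises automatically at primes dividing the discriminant of $B''$ — here one uses, as in the discussion after Definition \ref{def} and in Lemma \ref{versal}, that at a prime $p$ with $p^2\not\equiv 1\pmod\ell$ the sp-condition cuts out exactly the special (Steinberg-type) deformations, which are the ones seen by the Shimura curve. With this dictionary in place, the global deformation problem defining $\mathcal R^\psi_{\emptyset,S_2}$ is literally an instance of the functor $\mathcal F_0$ of Definition \ref{def} for the data $(N'',\Delta'',\ell)$, so $\mathcal R^\psi_{\emptyset,S_2}\cong\mR$ and $\TT^\psi_{\emptyset,S_2}\cong\TT^\psi$ for that data.

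Then I would simply invoke Theorem \ref{goal}: parts a) and b) give that $\mR$ is a complete intersection of dimension $1$ and that $\Phi:\mR\to\TT^\psi$ is an isomorphism, which transports back to the assertion that $\mathcal R^\psi_{\emptyset,S_2}\to\TT^\psi_{\emptyset,S_2}$ is an isomorphism of complete intersections. The hypothesis $\mathcal B_{\emptyset,S_2}\neq\emptyset$ is exactly the standing assumption used throughout Section \ref{tw} that the cohomological module $\mM$ is nonzero (equivalently, that there exists a newform $\widetilde f$ as in Section \ref{tw}), so it is needed precisely to launch the Taylor--Wiles argument; without it the rings may both be zero or the criterion vacuous. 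The hypothesis $\Delta_1\neq 1$ is what guarantees that when $S_2\ell$ has an even number of prime factors we still have a prime $p_0\mid\Delta_1$ available to absorb into the discriminant and keep it a product of an odd number of primes, so that an indefinite quaternion algebra with the required ramification exists.

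The main obstacle I expect is bookkeeping at the primes of $S_2$ and at $p_0$: one must be careful that moving a prime from the "special, in the level" position to the "ramified in the quaternion algebra" position does not change the local deformation condition, and that the nebentypus character $\widehat\psi = \prod_{p\mid N''}\psi_p\times\chi$ is still well-defined and of order prime to $\ell$ for the new level — this uses that $\psi$ factors through $(\ZZ/N\ell\ZZ)^\times$ and has order prime to $\ell$. A secondary point is checking that the auxiliary prime $s$ from Lemma 2 of \cite{DT} can be chosen uniformly (it only depends on $\orho$, not on $S_2$), so that the construction of $\MM_Q$ and the Taylor--Wiles system goes through verbatim. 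Once these compatibilities are recorded, the corollary is immediate from Theorem \ref{goal}.
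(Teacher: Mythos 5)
Your approach is exactly the paper's: the paper disposes of this corollary in one sentence, by observing that since $\Delta_1\neq 1$ the algebra $\TT^\psi_{\emptyset,S_2}$ acts on a local component of the cohomology of a Shimura curve attached to an indefinite quaternion algebra of discriminant $S_2\ell$ or $S_2\ell p_0$ for a prime $p_0\mid\Delta_1$, and then it invokes Theorem \ref{goal}. Your proposal simply fills in the level and discriminant bookkeeping that the paper leaves implicit, which is in the right spirit.

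There is, however, a parity slip that would stop the construction as written. An indefinite quaternion algebra over $\QQ$ is unramified at infinity, so its reduced discriminant is a product of an \emph{even} number of finite primes; this matches the paper's own conventions, where $\Delta'$ has an odd number of prime factors and the discriminant is $\Delta=\Delta'\ell$, which therefore has an even number. You instead declare that $\Delta''$ has an odd number of prime factors in both branches of your case distinction and then claim an indefinite $B''$ of discriminant $\Delta''$ exists, which cannot hold. The fix is to swap the two branches: take $\Delta''=S_2\ell$ when the number of primes dividing $S_2\ell$ is even, and $\Delta''=S_2\ell p_0$ when it is odd. A smaller point: what must be matched at the primes $p\in S_2$ is condition c) of Definition \ref{def} (the sp-condition), not condition b). For $p\in S_2\subseteq\Delta_2$ one has $\orho(I_p)=1$, so condition b) would force the lift to be unramified at $p$; it is the sp-condition, together with Ramanujan--Petersson as the paper notes after Definition \ref{def}, that singles out exactly the special deformations that the Shimura curve sees at a discriminant prime. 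With these corrections your reduction to Theorem \ref{goal} coincides with the paper's argument.
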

If $p\in S_2$ there is a commutative diagram:
\begin{eqnarray}
&\mathcal R^\psi_{S_1p,S_2/p}& \to \mathcal R^\psi_{S_1,S_2}\nonumber\\
&\downarrow &\ \ \ \ \ \downarrow\nonumber\\
&\TT^\psi_{S_1p,S_2/p}&\to \TT^\psi_{S_1,S_2}\nonumber\\
\end{eqnarray}
where all the arrows are surjections.\\
For every $p|\Delta_2$ the deformation over $\mathcal R^\psi_{\Delta_2,\emptyset}$ restricted to $G_p$ gives maps $$\mathcal R^{\psi}_p=\OO[[X,Y]]/(XY)\to\mathcal R^\psi_{\Delta_2,\emptyset}.$$ The image $x_p$ of $X$ and the ideal $(y_p)$ generated by the image $y_p$ of $Y$ in $\mathcal R^\psi_{\Delta_2,\emptyset}$ do not depend on the choice of the map. By an abuse of notation, we shall call $x_p,y_p$ also the image of $x_p,y_p$ in every quotient of $\mathcal R^\psi_{\Delta_2,\emptyset}$. If $h$ is a form in $\mathcal B_{\Delta_2,\emptyset}$, we denote by $x_p(h),y_p(h)\in\OO$ the images of $x_p,y_p$ by the map $\mathcal R^\psi_{\Delta_2,\emptyset}\to\OO$ corresponding to $\rho_h$.\\
\begin{lemma}
If $h\in\mathcal B_{\Delta_2}$ and $p|\Delta_2$, then:
\begin{itemize}
\item[a)] $x_p(h)=0$ if and only if $h$ is special at $p$;
\item[b)] if $h$ is unramified at $p$ then $(x_p(h))=(a_p(h)^2-\psi(p)(p+1)^2)$;
\item[c)] $y_p(h)=0$ if and only if $h$ is unramified at $p$;
\item[d)] if $h$ is special at $p$, the oreder at $(\lambda)$ of $y_p(h)$ is the greatest positive integer $n$ such that $\rho_h(I_p)\equiv\sqrt{\psi(p)}\otimes 1\ \modulo\ \lambda^n.$
\end{itemize}
\end{lemma}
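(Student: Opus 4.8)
The plan is to analyze the restriction to $G_p$ of the universal deformation, using the explicit description of the local versal ring $\mathcal R^\psi_p \simeq \OO[[X,Y]]/(XY)$ from Lemma \ref{versal} and its proof. Recall from there that $X$ corresponds to the obstruction to being special (the sp-condition forces $X \mapsto 0$), while $Y$ parametrizes the ramification in the remaining (unramified-at-$p$ or Steinberg) directions. For a newform $h \in \mathcal B_{\Delta_2}$, the map $\mathcal R^\psi_{\Delta_2,\emptyset} \to \OO$ attached to $\rho_h$ composed with $\mathcal R^\psi_p \to \mathcal R^\psi_{\Delta_2,\emptyset}$ gives a local homomorphism $\OO[[X,Y]]/(XY) \to \OO$, and the two numbers $x_p(h), y_p(h)$ are just the images of $X$ and $Y$. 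So each part is a statement about the local Galois representation $\rho_{h,p}$.

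For part a), I would argue that $\rho_{h,p}$ factors through the quotient corresponding to $X = 0$ precisely when it satisfies the sp-condition; by the Ramanujan--Petersson bound (Deligne) together with the remark after Definition \ref{def}, for $h$ of weight $2$ with $\orho(I_p)=1$ this is equivalent to $h$ being special (Steinberg) at $p$. For part b), when $h$ is unramified at $p$ the representation $\rho_{h,p}$ is an unramified deformation, so it lies in the $Y$-direction with $X = 0$ would be wrong — rather, I should use the explicit parametrization in the proof of Lemma 2.1 of \cite{Lea} as cited: the unramified deformations correspond to $X$ acting as the deviation $a_p(h)^2 - \psi(p)(p+1)^2$ of the trace-squared from its ``special'' value, which is exactly the content of $(x_p(h)) = (a_p(h)^2 - \psi(p)(p+1)^2)$. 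For part c), $y_p(h) = 0$ exactly when $\rho_{h,p}$ is unramified, which again is read off from the versal deformation: $Y$ is the parameter whose vanishing cuts out the unramified locus. For part d), when $h$ is special at $p$ the image $\rho_h(I_p)$ is a unipotent-by-$\sqrt{\psi(p)}$ element, and the order of $y_p(h)$ at $\lambda$ measures exactly how far into the maximal ideal the ramification goes, i.e.\ the largest $n$ with $\rho_h(I_p) \equiv \sqrt{\psi(p)} \otimes 1 \pmod{\lambda^n}$.

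Concretely, the key steps in order are: (1) recall the explicit versal deformation ring $\OO[[X,Y]]/(XY)$ and the meaning of the generators $X$ (special obstruction) and $Y$ (ramification parameter) from the proof of Lemma \ref{versal} and Lemma 2.1 of \cite{Lea}; (2) for a given $h$, identify $x_p(h), y_p(h)$ with the images of $X, Y$ under $\mathcal R^\psi_p \to \OO$; (3) translate ``special at $p$'' / ``unramified at $p$'' into vanishing of $X$ / $Y$ respectively, invoking Ramanujan for the equivalence with the sp-condition in part a); (4) compute the explicit value $a_p(h)^2 - \psi(p)(p+1)^2$ in the unramified case, using that $\tr \rho_{h,p}(\Frob_p) = a_p(h)$ and $\det = \psi(p)p$, so the ratio of eigenvalues governs the ramified deformation; (5) for part d), relate the $\lambda$-adic valuation of $y_p(h)$ to the precise level of the congruence of $\rho_h(I_p)$ with the trivial-up-to-$\sqrt{\psi(p)}$ representation.

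The main obstacle I anticipate is part d): pinning down that the $\lambda$-adic order of $y_p(h)$ is \emph{exactly} (not merely bounded by) the congruence level of $\rho_h(I_p)$. This requires knowing that the versal ring maps to $\OO$ in a way that sees $Y$ faithfully as the ``amount of ramification,'' which in turn rests on the structure of $\rho_{h,p}$ as a (possibly ramified) extension and a careful matching of the Tate-module filtration with powers of $\lambda$; the corresponding statement in the trivial-nebentypus case of \cite{Lea} should adapt, but the twist by $\sqrt{\psi(p)}$ needs to be carried through consistently, which is where one must be careful that $\psi(p)$ is a square in $\OO$ (guaranteed after enlarging $\OO$) and that the chosen square root is compatible with the determinant condition $\det \rho = \epsilon\psi$.
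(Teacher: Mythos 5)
Your approach matches the paper's: both identify $x_p(h),y_p(h)$ with the images of the generators $X,Y$ of the local versal ring $\OO[[X,Y]]/(XY)$ from the proof of the versal-ring lemma, read off parts a) and c) from the dichotomy $X=0 \leftrightarrow$ sp-condition $\leftrightarrow$ special at $p$ (using Ramanujan--Petersson to exclude unramified forms with $a_p^2 \equiv \psi(p)(p+1)^2$) and $Y=0 \leftrightarrow$ unramified, and deduce b) from the explicit form of $\rho_h(\Frob_p)$ in terms of $x_p(h)$ together with $\tr\rho_h(\Frob_p)=a_p(h)$. The paper's written proof is terser (it only spells out b) and points to the versal-ring lemma for the rest), but the underlying argument and ingredients are the same.
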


\begin{proof}
It is an immediate consequence of the definition of the sp-condition (proof of lemma \ref{versal}). Statement b), follows from the fact that $$a_p(h)=\tr(\rho_h(\Frob_p))$$ and $$\rho_h(\Frob_h)=\left(
\begin{array}
[c]{cc}%
\pm p\sqrt{\psi(p)}+x_p(h) & 0\\
0 & p\psi(p)/(\pm p\sqrt{\psi(p)}+x_p(h))
\end{array}
\right).$$ \end{proof}

\noindent In particular $(y_p)$ is the kernel of the map $\mathcal R^\psi_{S_1,S_2}\to\mathcal R^\psi_{S_1/p,S_2}$.

\noindent If $h\in\mathcal B_{S_1,S_2}$ let $\theta_{h,S_1,S_2}:\TT^\psi_{S_1,S_2}\to\OO$ be the character corresponding to $h$.

\noindent We consider the congruence ideal of $h$ relatively to $\mathcal B_{S_1,S_2}$: $$\eta_{h,S_1,S_2}=\theta_{h,S_1,S_2}(Ann_{\TT^\psi_{S_1,S_2}}(\ker\ \theta_{h,S_1,S_2})).$$

\noindent It is know that $\eta_{h,S_1,S_2}$ controls congruences between $h$ and linear combinations of forms different from $h$ in $\mathcal B_{S_1,S_2}$.

\begin{theorem}\label{cong}
Suppose $\Delta_1\not=1$ and $\Delta_2$ as above. Then
\begin{itemize}
\item[a)] $\mathcal B_{\emptyset,\Delta_2}\not=0$;
\item[b)] for every subset $S\subseteq\Delta_2$, the map $\mathcal R^\psi_{S,\Delta_2/S}\to\TT^\psi_{S,\Delta_2/S}$ is an isomorphism of complete intersection;
\item[c)] for every $h\in\mathcal B_{\emptyset,\Delta_2}$, $\eta_{h,S,\Delta_2/S}=(\prod_{p|S}y_p(h))\eta_{h,\emptyset,\Delta_2}.$
\end{itemize}
\end{theorem}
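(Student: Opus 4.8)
\medskip
\noindent\emph{Proof sketch.}
The plan is to obtain all three assertions from what is already available, by an induction that runs parallel to the corresponding results of \cite{Lea}; the only features going beyond \emph{loc.\ cit.} are the nebentypus, which has been incorporated into the $\widehat\psi$-isotypic constructions of Sections \ref{shi}--\ref{tw} and into the condition $\det\rho=\epsilon\psi$, and the merely potentially Barsotti--Tate type at $\ell$, handled once and for all by Savitt's theorem in Sections \ref{de} and \ref{gen}.

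\medskip
\noindent For a) I would prove, by induction on $|S_2|$, that $\mathcal B_{\emptyset,S_2}\ne\emptyset$ for \emph{every} $S_2\subseteq\Delta_2$. The base $S_2=\emptyset$ is the standing hypothesis that $\orho$ occurs with type $\tau$ and minimal level. For the step, pick $h\in\mathcal B_{\emptyset,S_2}$ and $p\in\Delta_2\setminus S_2$; since $p\nmid N\Delta_1\ell$ the form $h$ is unramified at $p$, so $a_p(h)\equiv\tr(\orho(\Frob_p))\ \modulo\ \lambda$, and the hypotheses on $\Delta_2$ give $a_p(h)^2\equiv\psi(p)(p+1)^2\ \modulo\ \lambda$ and $p\not\equiv-1\ \modulo\ \ell$. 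The level-raising theorem stated in the introduction then yields a newform $g$ of level $N\Delta_1(S_2\cup\{p\})\ell^2$, congruent to $h$ modulo $\lambda$, still of type $\tau$ at $\ell$ and special at $\Delta_1$, and now special at every prime of $S_2\cup\{p\}$; absolute irreducibility of $\orho$ forces $\orho_g\simeq\orho$, so $g\in\mathcal B_{\emptyset,S_2\cup\{p\}}$. Taking $S_2=\Delta_2$ gives a); moreover this guarantees $\mathcal B_{S,\Delta_2/S}\ne\emptyset$, hence $\TT^\psi_{S,\Delta_2/S}\ne0$, for every $S\subseteq\Delta_2$, so the assertions of b) and c) are not vacuous.

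\medskip
\noindent For b) and c) I would induct, imposing the sp-condition one prime at a time, starting from the complete-intersection isomorphism $\mathcal R^\psi_{\Delta_2,\emptyset}\to\TT^\psi_{\Delta_2,\emptyset}$, the case $S=\Delta_2$, already known from Section \ref{gen} (the case $S=\emptyset$ also follows by combining a) with the Corollary just before the statement, and serves as a check). The step passes from $(S_1,S_2)$ to $(S_1\setminus\{p\},S_2\cup\{p\})$, $p\in S_1$. By the universal property this replaces the deformation ring by its quotient modulo the image $x_p$ of the variable $X$ of the local ring $\mathcal R^\psi_p=\OO[[X,Y]]/(XY)$ (Lemma \ref{versal} and its proof), and, by the classical oldform/newform analysis exactly as in \cite{Lea}, it replaces $\TT^\psi_{S_1,S_2}$ by the corresponding quotient; since the two rings are isomorphic before dividing, the quotients stay isomorphic, and the commutative square displayed before the statement keeps all arrows compatible. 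For the complete-intersection property I would argue as in \cite{Lea}: in a Taylor--Wiles presentation $\mathcal R^\psi_{S_1,S_2}=\OO[[z_1,\dots,z_n]]/(f_1,\dots,f_n)$ one may take $x_p$ to be one of the $z_i$ and the relation $x_py_p$ arising from $\mathcal R^\psi_p$ to be one of the $f_j$, so that setting $x_p=0$ trades a generator for a relation; equivalently one checks Wiles' numerical criterion for the new pair from that (with equality) for the old one, the cotangent space and the congruence ideal changing compatibly. The change on the congruence side is exactly c): as $h\in\mathcal B_{\emptyset,\Delta_2}$ is special at $p$, it corresponds to the point $X=0$, $Y=y_p(h)\ne0$ of $\mathcal R^\psi_p/(X)=\OO[[Y]]$, and the elementary identity $\mathrm{Ann}_{\mathcal R^\psi_p}(X\mathcal R^\psi_p)=Y\mathcal R^\psi_p$, combined with multiplicativity of congruence ideals along $\mathcal R^\psi_{S_1,S_2}\to\mathcal R^\psi_{S_1,S_2}/(x_p)$, shows that the congruence ideal of $h$ picks up precisely the factor $(y_p(h))$ at this step; iterating over the primes of $S$ yields $\eta_{h,S,\Delta_2/S}=(\prod_{p|S}y_p(h))\,\eta_{h,\emptyset,\Delta_2}$.

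\medskip
\noindent The step I expect to be the real obstacle is the propagation of the complete-intersection property through this chain of sp-impositions, i.e.\ checking that the Taylor--Wiles complete intersection $\mathcal R^\psi_{\Delta_2,\emptyset}\cong\TT^\psi_{\Delta_2,\emptyset}$ remains one after each division by an $x_p$, despite $x_p$ being a zero-divisor. This hinges on the precise local structure $\mathcal R^\psi_p=\OO[[X,Y]]/(XY)$ --- one relation among two generators --- together with the Taylor--Wiles presentation having equally many generators and relations, so that the relation $x_py_p$ can be counted against the generator $x_p$; keeping the Hecke algebras exactly in step requires the oldform--newform bookkeeping of \cite{Lea}, and it is there that Ihara's lemma for modular curves intervenes. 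I stress that the present theorem deliberately makes no claim about the freeness of a quaternionic cohomology module over $\TT^\psi_{S,\Delta_2/S}$, so the absence of a quaternionic analogue of Ihara's lemma is not an obstruction here.
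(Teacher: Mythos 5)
The paper itself gives essentially no proof of Theorem~\ref{cong}: it says only that \lq\lq the proof of this theorem is essentially the same as in \cite{Lea}.\rq\rq\ Your sketch fleshes out the Terracini-style induction in a way that is broadly aligned in spirit, and parts b) and c) follow the expected route (induct on $|S|$, impose the sp-condition one prime at a time, use the local structure $\mathcal R^\psi_p=\OO[[X,Y]]/(XY)$ and Wiles' numerical criterion with the old/new bookkeeping keeping $\TT$ in step). You correctly note that the genuine difficulty is identifying the kernel of $\TT^\psi_{S_1,S_2}\to\TT^\psi_{S_1\setminus p,\,S_2\cup p}$ with $(x_p)$, which is where Ihara's lemma for modular curves enters via the CDT machinery.

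The gap is in a). You invoke \lq\lq the level-raising theorem stated in the introduction\rq\rq\ as the tool that produces, from $h\in\mathcal B_{\emptyset,S_2}$ and $p\in\Delta_2\setminus S_2$, a congruent $g$ special at $p$. But in this paper that level-raising theorem is \emph{deduced} from Theorem~\ref{cong}: it is stated in Section~\ref{con} after the corollary \lq\lq $\eta_{h,\Delta_2,\emptyset}=\prod x_p(h)\prod y_p(h)\,\eta_{h,S_1,S_2}$,\rq\rq\ which in turn combines point c) of Theorem~\ref{cong} with \cite{CDT}~\S5.5. For $\Delta_2=\{q\}$, that level-raising statement is in fact essentially \emph{equivalent} to part a), so using it as an input is circular. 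To repair the argument you should appeal to an independent level-raising ingredient: either Diamond--Taylor's non-optimal level theorem in the form that preserves the supercuspidal type $\tau$ at $\ell$, or, more in keeping with the framework of Sections~\ref{gen}--\ref{con}, deduce nonemptiness of $\mathcal B_{\emptyset,\Delta_2}$ directly from Proposition~\ref{gcdt} together with the congruence-ideal control in \cite{CDT}~\S5.5 on the modular-curve side (this is available here precisely because of the minimal-level hypothesis, which puts you on the modular-curve module $\mathcal M^{\mathrm{mod},\psi}_S$). Once a) is obtained non-circularly, the induction for b) and c) you outline is the right shape, though it should be phrased via a numerical-criterion step (De Smit--Rubin--Schoof / \cite{Dia}) rather than the heuristic \lq\lq the quotients stay isomorphic,\rq\rq\ since $x_p$ is a zero-divisor and the identification $\TT^\psi_{S_1,S_2}/(x_p)\cong\TT^\psi_{S_1\setminus p,\,S_2\cup p}$ is exactly what has to be proved.
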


\noindent The proof of this theorem is essentilly the same as in \cite{Lea}.\\
If we combine point c) of theorem \ref{cong} to the results in Section 5.5 of \cite{CDT}, we obtain:
\begin{corollary} 
If $h\in\mathcal B_{S_1,S_2},$ then
\begin{displaymath}
  \eta_{h,\Delta_2,\emptyset}=\prod_{p|\frac{\Delta_2}{S_1S_2}}x_p(h)\prod_{p|S_2}y_p(h)\eta_{h,S_1,S_2}.
  \end{displaymath}
\end{corollary}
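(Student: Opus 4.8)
\noindent The plan is to reduce the statement to a prime-by-prime computation: move each prime of $\Delta_2$ from the state it occupies for the deformation problem of type $(S_1,S_2)$ to the ramification-allowed state it occupies for the problem of type $(\Delta_2,\emptyset)$, and keep track of how the congruence ideal changes at each step. Write $T$ for the set of primes dividing $\Delta_2/(S_1S_2)$. Since $h\in\mathcal B_{S_1,S_2}$ its level divides $N\Delta_1S_1S_2\ell$, so $h$ is unramified at every $p\in T$, it is special at every prime of $S_2$, and it lies in $\mathcal B_{S_1',S_2'}$ for every pair of disjoint subsets $S_1',S_2'\subseteq\Delta_2$ with $S_2'\subseteq S_2$ and $S_1\cup S_2\subseteq S_1'\cup S_2'$, as well as in $\mathcal B_{\Delta_2,\emptyset}$; hence all the congruence ideals below are defined. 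I will pass from $(S_1,S_2)$ to $(\Delta_2,\emptyset)$ along the chain
$$(S_1,S_2)\ \longrightarrow\ (S_1\cup T,\,S_2)\ \longrightarrow\ (S_1\cup T\cup S_2,\,\emptyset)=(\Delta_2,\emptyset),$$
adjoining first the primes of $T$ and then those of $S_2$ to the ramification-allowed set, one prime at a time.

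\noindent First I record the effect of the two elementary moves. At every stage the relevant map $\mathcal R^\psi_{\bullet,\bullet}\to\TT^\psi_{\bullet,\bullet}$ is a complete intersection isomorphism: each intermediate ring has the form $\mathcal R^\psi_{S_1',S_2'}$ with $S_1',S_2'$ disjoint, and since $\mathcal R^\psi_{S_1',S_2'}$ and $\TT^\psi_{S_1',S_2'}$ depend only on $S_1'$ and $S_2'$ one may apply Theorem \ref{cong} b) with the auxiliary set taken to be $S_1'\cup S_2'$ (which inherits the hypotheses imposed on $\Delta_2$); the corresponding form spaces are non-empty since $h$ lies in all of them. \emph{(i) Level-raising at $p\in T$.} Passing from $(S_1',S_2')$ to $(S_1'\cup\{p\},S_2')$, with $h$ unramified at $p$, amounts to raising the level at $p$; the computation of \S5.5 of \cite{CDT} applies — this is legitimate here because the quaternionic module $\mM$ is identified in Section \ref{gen} with the $\psi$-part $\mathcal M^{{\rm mod},\psi}_\emptyset$ of the modular module, which makes the Ihara lemma for modular curves available exactly as in \cite{CDT} — and it gives
$$\eta_{h,S_1'\cup\{p\},S_2'}=\big(x_p(h)\big)\,\eta_{h,S_1',S_2'},\qquad (x_p(h))=(a_p(h)^2-\psi(p)(p+1)^2)\neq 0.$$
\emph{(ii) Dropping the sp-condition at $p\in S_2'$.} Passing from $(S_1',S_2')$ to $(S_1'\cup\{p\},S_2'\setminus\{p\})$, with $h$ special at $p$, replaces the sp-condition at $p$ by the condition allowing arbitrary ramification at $p$; this is precisely the inductive step in the proof of Theorem \ref{cong} c) (following Terracini \cite{Lea}), and it gives
$$\eta_{h,S_1'\cup\{p\},S_2'\setminus\{p\}}=\big(y_p(h)\big)\,\eta_{h,S_1',S_2'},\qquad y_p(h)\neq 0.$$

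\noindent Now iterate. Adjoining the primes of $T$ to the ramification-allowed set one at a time and applying (i) repeatedly — at each step $h$ stays in the ambient space, being unramified at the primes of $T$ — we obtain
$$\eta_{h,S_1\cup T,S_2}=\Big(\prod_{p\in T}x_p(h)\Big)\,\eta_{h,S_1,S_2}.$$
Removing the primes of $S_2$ from the sp-set one at a time and applying (ii) repeatedly — at each step $h$ stays in the ambient space, being special at the primes of $S_2$ — we obtain
$$\eta_{h,\Delta_2,\emptyset}=\Big(\prod_{p\in S_2}y_p(h)\Big)\,\eta_{h,S_1\cup T,S_2}.$$
The primes of $S_1$ are untouched throughout; combining the two displays gives
$$\eta_{h,\Delta_2,\emptyset}=\prod_{p\mid\frac{\Delta_2}{S_1S_2}}x_p(h)\ \prod_{p\mid S_2}y_p(h)\ \eta_{h,S_1,S_2},$$
which is the assertion.

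\noindent The step I expect to be the main obstacle is move (i): transplanting the level-raising computation of \S5.5 of \cite{CDT} from the setting of forms semistable at $\ell$ to that of forms supercuspidal of type $\tau$ at $\ell$ with nontrivial nebentypus. The essential input is the comparison of $\mM$ with $\mathcal M^{{\rm mod},\psi}_\emptyset$ from Section \ref{gen}, which is what makes the Ihara lemma for modular curves usable; one must also check that the complete intersection isomorphisms persist along the whole chain — including for the mixed deformation problems, where sp-primes and ramification-allowed primes occur simultaneously — and that the absolute irreducibility and non-Eisenstein hypotheses required by the \cite{CDT} argument coincide with the standing hypotheses on $\orho$ from Section \ref{de}.
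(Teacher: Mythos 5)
Your proof matches the paper's (one-line) argument in substance: decompose the passage from $(S_1,S_2)$ to $(\Delta_2,\emptyset)$ into single-prime moves, use CDT \S5.5 to supply the $x_p$ factors at the primes of $T=\Delta_2/(S_1S_2)$ (where $h$ is unramified), and use the inductive step underlying Theorem 8.1 c) — Terracini's sp-to-ramified step — to supply the $y_p$ factors at the primes of $S_2$ (where $h$ is special). The membership checks and the complete-intersection input at every intermediate stage are handled correctly via Theorem 8.1 b) applied with the auxiliary set $S_1'\cup S_2'$.

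Where I would push back is on the \emph{order} you chose and the justification you give for move (i). You adjoin the $T$-primes first, so each level-raising step is performed on a problem $(S_1',S_2)$ that still carries sp-conditions at $S_2$; you justify transplanting \S5.5 of CDT to this setting by appealing to the identification $\mM\simeq\mathcal M^{{\rm mod},\psi}_\emptyset$ from Section 7. But that identification is established precisely for the problems with $S_2'=\emptyset$ — it is the statement that the quaternionic and modular cohomologies agree in the absence of sp-conditions — and the paper explicitly remarks, when setting up $\mathcal R^\psi_{S_1,\emptyset}\to\TT^\psi_{S_1,\emptyset}$, that it is Section 7 that makes this work \emph{for any $S_1\subseteq\Delta_2$ only when the sp-set is empty}. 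With $S_2\neq\emptyset$ present the relevant cohomological module is a Shimura-curve module, and Ihara's lemma there is exactly Conjecture 0.2 of the paper, which is not available. The clean way to avoid this is to reverse your two stages: first run move (ii) along $(S_1,S_2)\to(S_1S_2,\emptyset)$ (total set fixed at $S_1S_2$, so every intermediate ring is covered by Theorem 8.1 b) with $\Delta_2'=S_1S_2$ and no Ihara input is needed), collecting $\prod_{p|S_2}y_p(h)$; and only then run move (i) along $(S_1S_2,\emptyset)\to(\Delta_2,\emptyset)$, where CDT \S5.5 and the Section 7 identification apply verbatim since the sp-set is empty throughout, collecting $\prod_{p|T}x_p(h)$. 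The final displayed identity is of course the same; the point is that the level-raising then happens entirely on the modular-curve side, which is where the generalized CDT machinery actually lives. With that reordering your argument is a faithful and correctly detailed rendering of the paper's sketch.
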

In particular, from this corollary we deduce the following theorem:
\begin{theorem}
Let $f=\sum a_nq^n$ be a normalized newform in $S_2(\Gamma_0(M\ell^2),\psi)$ supercuspidal of type $\tau=\chi\oplus\chi^\sigma$ at $\ell$, with minimal level, special at primes in a finite set $\Delta'$, there exist $g\in S_2(\Gamma_0(qM\ell^2),\psi)$ supercuspidal of type $\tau$ at $\ell$, special at every prime $p|\Delta'$ such that $f\equiv g\ \modulo\ \lambda$ if and only if $$a_q^2\equiv\psi(q)(1+q)^2\ \modulo\ \lambda$$ where $q$ is a prime such that $(q,M\ell^2)=1,$ $q\not\equiv-1\ \modulo\ \ell$. 
\end{theorem}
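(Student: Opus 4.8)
The plan is to obtain the theorem from the apparatus of Section~\ref{con}, applied to the singleton set of auxiliary primes $\Delta_2=\{q\}$. Here $\Delta_1=\Delta'$ plays the role of the set of special primes; it is $\neq1$, being a product of an odd number of primes, so the hypothesis $\Delta_1\neq1$ of Theorem~\ref{cong} and of Section~\ref{gen} is met, and the standing hypotheses of Section~\ref{con} hold for $\orho=\orho_f$: absolute irreducibility~(\ref{con1}), triviality of the centralizer of $\orho_\ell$~(\ref{end}), $\psi$ of order prime to $\ell$, minimality of the level at the primes dividing $\Delta'$, together with~(\ref{cond2}) and~(\ref{c3}). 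The hypothesis $q\not\equiv-1\ \modulo\ \ell$ makes $1+q$ a unit in $k$; I shall in addition assume $q\not\equiv1\ \modulo\ \ell$ (so $q^2\not\equiv1\ \modulo\ \ell$), which is what lets one place $q$ inside the framework of Section~\ref{con} — the residual case $q\equiv1\ \modulo\ \ell$, in which $\orho(\Frob_q)$ has a repeated eigenvalue, falls outside it and is discussed at the end. Finally, ``there exists $g$'' is read as ``there exists a newform $g$ genuinely new at $q$'', the case of an oldform being vacuous.

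The \emph{``only if''} implication is purely local. Let $g$ be a newform new at $q$, of level dividing $qM\ell^2$, supercuspidal of type $\tau$ at $\ell$, special at the primes dividing $\Delta'$, with $f\equiv g\ \modulo\ \lambda$; equality of Frobenius traces away from the level and irreducibility of $\orho$ then give $\orho_g\simeq\orho$. As $q\nmid M\ell$, the nebentypus $\psi$ is unramified at $q$, and a newform new at $q$ with unramified central character at $q$ must be special at $q$ (a ramified principal series has ramified central character, a supercuspidal has conductor exponent $\ge2$). Thus $\rho_g|_{G_q}$ is a non-split extension of an unramified character $\mu$ by $\mu\epsilon^{\pm1}$, so $\rho_g(\Frob_q)$ has eigenvalues $\beta,q\beta$ with $q\beta^2=\det\rho_g(\Frob_q)=q\psi(q)$ (using $\epsilon(\Frob_q)=q$), i.e.\ $\beta^2=\psi(q)$. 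Reducing modulo $\lambda$ and using $\orho_g\simeq\orho$, the eigenvalues of $\orho(\Frob_q)$ are $\bar\beta,q\bar\beta$ with $\bar\beta^2=\psi(q)$, hence $\tr(\orho(\Frob_q))^2=\bar\beta^2(1+q)^2=\psi(q)(1+q)^2$ in $k$. Since $q\nmid M\ell$, the form $f$ is unramified at $q$ and $a_q=\tr(\rho_f(\Frob_q))$, so $a_q^2\equiv\psi(q)(1+q)^2\ \modulo\ \lambda$.

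For the \emph{``if''} implication, assume $a_q^2\equiv\psi(q)(1+q)^2\ \modulo\ \lambda$. Since $a_q\equiv\tr(\orho(\Frob_q))\ \modulo\ \lambda$ this reads $\tr(\orho(\Frob_q))^2\equiv\psi(q)(1+q)^2\ \modulo\ \ell$, and together with $q\nmid\Delta'\ell$ and $q^2\not\equiv1\ \modulo\ \ell$ it is exactly the condition for $\{q\}$ to be admissible as $\Delta_2$ in Section~\ref{con}. The restriction-to-$G_q$ map $\mathcal O[[X,Y]]/(XY)\to\mathcal R^\psi_{\{q\},\emptyset}$ is a local homomorphism, so the image $x_q$ of $X$ lies in the maximal ideal of the local ring $\mathcal R^\psi_{\{q\},\emptyset}$ and is not a unit; hence $\mathcal R^\psi_{\emptyset,\{q\}}:=\mathcal R^\psi_{\{q\},\emptyset}/(x_q)\neq0$, and by the proof of Lemma~\ref{versal} this quotient is precisely the deformation ring in which the sp-condition at $q$ is imposed. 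The complete-intersection isomorphism $\mathcal R^\psi_{\emptyset,\{q\}}\simeq\TT^\psi_{\emptyset,\{q\}}$ of Theorem~\ref{cong}(b) (equivalently, of the generalized Conrad--Diamond--Taylor statement of Section~\ref{gen}) now shows $\TT^\psi_{\emptyset,\{q\}}\neq0$, i.e.\ $\mathcal B_{\emptyset,\{q\}}\neq\emptyset$ — which is Theorem~\ref{cong}(a). Any $g\in\mathcal B_{\emptyset,\{q\}}$ is a newform of weight $2$ and nebentypus $\psi$, of level dividing $qM\ell^2$, supercuspidal of type $\tau$ at $\ell$, special at the primes dividing $\Delta'$, special at $q$, with $\orho_g\simeq\orho=\orho_f$; being special at $q$ puts $q$ into its level, and since $M=N\Delta'$ is square-free, condition~(\ref{cond2}) and minimality at $\Delta'$ pin the conductor of $\rho_g$ at every prime dividing $M$ to that of $\rho_f$, so the level of $g$ is exactly $qM\ell^2$ and $g\in S_2(\Gamma_0(qM\ell^2),\psi)$. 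Finally $f\equiv g\ \modulo\ \lambda$: for $p\nmid qM\ell$ both $a_p(f)$ and $a_p(g)$ reduce to $\tr(\orho(\Frob_p))$, while at the remaining primes $f$ and $g$ define characters of the local Hecke algebra $\TT^\psi_{\emptyset,\{q\}}$ with the same reduction modulo its maximal ideal, so the corresponding $U_p$- and Atkin--Lehner eigenvalues are also congruent modulo $\lambda$.

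The entire weight of the argument rests on the ``if'' direction, and there on the non-emptiness of $\mathcal B_{\emptyset,\{q\}}$, i.e.\ on Theorem~\ref{cong}(a)--(b): this is where the generalized Conrad--Diamond--Taylor theorem of Section~\ref{gen} — hence Savitt's theorem, which controls the local deformation ring at $\ell$ — and Ihara's lemma for modular curves, which effects the passage from minimal level to level $q$, are used. Equivalently, this direction may be read off from the congruence-ideal identity $\eta_{f,\{q\},\emptyset}=x_q(f)\,\eta_{f,\emptyset,\emptyset}$ of the preceding corollary, as indicated in the paper. The one genuine gap in the plan is the residual case $q\equiv1\ \modulo\ \ell$: there $\orho(\Frob_q)$ has a repeated eigenvalue and the local deformation ring at $q$ no longer has the shape $\mathcal O[[X,Y]]/(XY)$ of Lemma~\ref{versal}, so a separate local computation is needed, in the spirit of the $q\equiv1$ case of Diamond--Taylor's level-raising; I expect this to be the main obstacle.
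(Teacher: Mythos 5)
Your proof is correct and follows essentially the same route as the paper: the paper itself gives no argument beyond the sentence ``from this corollary we deduce the following theorem,'' i.e.\ it reads the statement off the congruence-ideal identity $\eta_{f,\Delta_2,\emptyset}=\prod x_p(f)\prod y_p(f)\,\eta_{f,S_1,S_2}$ with $\Delta_2=\{q\}$, $\Delta_1=\Delta'$, exactly the reduction you carry out, and your more explicit variant via Theorem~\ref{cong}(a)--(b) and the non-vanishing of $\mathcal R^\psi_{\{q\},\emptyset}/(x_q)$ is a legitimate unpacking of the same mechanism. Your flag on $q\equiv 1\ \modulo\ \ell$ is also well taken: the framework of Section~\ref{con} explicitly assumes $p^2\not\equiv 1\ \modulo\ \ell$ for $p\in\Delta_2$ (so that Lemma~\ref{versal} applies and the local deformation ring at $q$ is $\OO[[X,Y]]/(XY)$), whereas the theorem as stated only excludes $q\equiv -1\ \modulo\ \ell$; as written, the paper's argument therefore establishes the theorem only for $q\not\equiv\pm 1\ \modulo\ \ell$, and the case $q\equiv 1\ \modulo\ \ell$ (where $\orho(\Frob_q)$ has a repeated eigenvalue) would require a separate local analysis that the paper does not supply.
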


\noindent The problem of remuve the hypothesis of minimal level, is still open and could be solved by proving conjecture \ref{noscon}

\section{Problem: extension of results to the non minimal case}

Let $\ell\geq 2$ be a prime number. Let $\Delta'$ be a product of an odd number of primes, different from $\ell$. We put $\Delta=\Delta'\ell$. Let $B$ be the indefinite quaternion algebra over $\QQ$ of discriminant $\Delta$. Let $R$ be a maximal order in $B$.\\
 Let $N$ be a positive rational number, we observe that in our deformation problem, in section \ref{de}, we have assumed that the representation $\orho$ associated to $f\in S_2(\Gamma_0(N\Delta\ell),\psi)$, occurs with type $\tau$ and minimal level at $N$ ( not necessarily at $\Delta'$). Let now $S$ be a finite set of rational primes which does not divide $M\ell$, where $M=N\Delta'$, we fix $f\in S(\Gamma_0(N\Delta\ell S),\psi)$ a modular newform of weight 2, level $N\Delta\ell S$, supercuspidal of type $\tau$ at $\ell$, special at primes dividing $\Delta'$  and with Nebentypus $\psi$. Let $\rho$ be the Galois representation associated to $f$ and let $\orho$ be its reduction modulo $\ell$; we suppose that conditions (\ref{con1}), (\ref{cond2}), (\ref{rara}), (\ref{end}) and (\ref{c3}) hold.
\noindent If we denote by $\Delta_1$ the product of primes $p|\Delta'$ such that $\orho(I_p)\not=1$ and by $\Delta_2$ the product of primes $p|\Delta'$ such that $\orho(I_p)=1$. As usual we assume that $p^2\not\equiv 1\ \modulo\ \ell$ if $p|\Delta_2$. We say that the representation $\rho$ is of {\bf type $(S,{\rm sp},\tau,\psi)$} if conditions b), c), d), e) of definition \ref{def} hold and
\begin{itemize}
\item[a)] $\rho$ is unramified outside $M\ell S$.
\end{itemize} 

\noindent This is a deformation condition. Let $\mathcal R^\psi_S$ be the universal deformation ring which parametrizes representations of type $(S,{\rm sp},\tau,\psi)$ with residual representation $\orho$ and let $\TT^\psi_S$ be the Hecke algebra acting on the space of modular forms of type $(S,{\rm sp},\tau,\psi)$.\\ 
Since the dimension of the Selmer groups do not satisfy the control conditions,  it is not possible to construct a Taylor-Wiles system by considering a deformation problem of type $(S,{\rm sp},\tau,\psi)$ with $S\not=\emptyset$, as in section \ref{tw}; a still open problem is  to prove theorem \ref{goal} for $\mathcal R^\psi_S$, $\TT^\psi_S$ and $\mM_S$. If $S=0$ then we retrouve our theorem \ref{goal}. A possible approach to this problem is to use, as in the classical case, the results of De Smit, Rubin,  Schoof \cite{DRS} and Diamond \cite{Dia}, by induction on the cardinality of $S$. If we make the inductive hypothesis assuming the result true for $S$, we have to verify that the result holds for $S'=S\cup\{q\}$ where $q$ is a prime number not dividing $M\ell S$. Following the litterature, to prove the inductive step the principal ingredients are:
\begin{itemize}
\item a duality result about $\mathcal M^\psi_S$ (to appeare), showing the existence  of a perfect pairing $\mathcal M^\psi_S\s \mathcal M^\psi_S\to\OO$ which induces an isomorphism $\mathcal M^\psi_S\to\Hom_\OO(\mathcal M^\psi_S,\OO)$ of $\TT^\psi_S$-modules;
\item conjecture \ref{noscon}, saying that the natural $\TT^\psi_{S'}$-injection $\left(\mathcal M^\psi_S\right)^2\hookrightarrow \mathcal M^\psi_{S'}$ is injective when tensorized with $k$.
\end{itemize} 
For semplicity, we shall assume that $q^2\not\equiv 1\ \modulo\ \ell$ for every $q\in S$.

\noindent Then, as observed in the previous section, there is an isomorphism: $$\frac{\mathcal R^\psi_{S'}}{(y_q)}\ \widetilde\to\ \mathcal R^\psi_{S}$$ that induces an isomorphism on the Hecke algebras: $$\frac{\TT^\psi_{S'}}{(y_q)_{\TT^\psi_{S'}}}\ \widetilde\to\ \TT^\psi_{S}$$
where $(y_q)_{\TT^\psi_{S'}}$ is the image of $(y_q)$ in $\TT^\psi_{S'}$; but, for a general $S$, we don't have any information about $\mathcal M^\psi_S=H^1(\XX_1(NS),\OO)_{\mm_S}^{\widehat\psi}$ as a $\TT_S^{\widehat\psi}$-module, where $\mm_S$ is the inverse image of $\mm$ with respect to the natural map $\TT_S^{\widehat\psi}\to \TT_\emptyset^{\widehat\psi}$.


\begin{thebibliography}{99}
\bibitem{BCDT}{BREUIL C., CONRAD B., DIAMOND F., TAYLOR R.: On the modularity of elliptic curves over $\QQ$, J.A.M.S. 14 (2001), pp. 843-939}
\bibitem{BM}{BREUIL C., M\'{E}ZARD A.: Multiplicit\'{e}s modulaires et repr\'{e}sentations de $GL_2(\ZZ_p)$ et de $\galois(\overline\QQ_p/\QQ_p)$ en $\ell=p$, Duke Math. J. 115 (2002), no. 2, pp. 205-310, With an appendix by Guy Henniart. }
\bibitem {Ca}{CASSELMAN W.: 	On some results of Atkin and Leher. Math. Ann. 201 (1973), 301-314  }
\bibitem{}{CIAVARELLA M.: Eisenstein ideal and reducible $\lambda$-adic representations 
unramified outside a finite number of primes. In Bollettino U.M.I., 2006, to appear.}
\bibitem{miri}{CIAVARELLA M.: Congruences Between Modular Forms and Related Modules. Nota Preventiva, in Bollettino U.M.I. (8) 9-B, 2006, 507-514}
\bibitem{CDT} {CONRAD B., DIAMOND F., TAYLOR R.: Modularity of certain Potentially Barsotti-Tate Galois Representations, in Journal of the American Mathematical Society, Vol.12, Number 2, April 1999, 521-567 }  
\bibitem {DDT}{DARMON H., DIAMOND F., TAYLOR R.: Fermat's Last Theorem, in Current Developments in Mathematics, 1995, International Press, 1-154   }
\bibitem {DDT1}{DARMON H., DIAMOND F., TAYLOR R.: Fermat's Last Theorem, in Elliptic Curves, Modular Forms and Fermat's Last Theorem (Hong Kong, 1993), ( Second Ed.)(Cambridge, MA,1997) International Press, 2-140   }
\bibitem{De}{DELIGNE  P.: Formes modulaires et repr\'esentations $\ell$-adiques. LNM 179, 139-172, Springer (1971)}

\bibitem {Des}{DE SHALIT E.: Hecke Ring and Universal Deformation Rings, In Modular Forms and Fermat's Last  Theorem, G. Cornell, H. Silverman, and G. Stevens, Eds. Springer, 1997, 421-445   }
\bibitem{sl} {DE SMIT B. ET LENSTRA H. W.: Explicit construction of universal deformation rings. In Modular  Forms and Fermat's Last Theorem, G. Cornell, H. Silverman et G. Stevens, Eds. Springer, 1997, pp. 313-326.}

\bibitem{DRS}{DE SMIT B., RUBIN K., AND SCHOOF R.:  Criteria for complete intersection, in Modular Forms and Fermat's Last Theorem, G. Cornell, H. Silverman et G. Stevens, Eds. Springer, 1997, pp. 343-356 }

\bibitem{Dia}{DIAMOND F.: Congruence primes for cusp forms of weight $k\geq 2$. Ast\'{e}risque 196-197, Courbes Modulaires et Courbes de Shimura. 1991 pp. 205-213}
\bibitem {DT}{DIAMOND F. AND TAYLOR R.: Lifting modular $\modulo\ \ell$ representations. Duke Math. J. 74 (1994), 253-269  }
\bibitem{DTi}{DIAMOND F. AND TAYLOR R.: Non-optimal levels of mod $\ell$ modular representations. Invent. Math. 115, 435-462 (1994)}
\bibitem {D}{DIAMOND F.: The Taylor-Wiles construction and multiplicity one. Invent. Math. 128 (1997), 379-391  }

\bibitem{F2}{FONTAINE J.: Repr\'esentations $\ell$-adiques potentiellement semistables. In P\'eriodes $p$-adiques (1994), vol. 223 of Ast\'erisque, Soc. Math. de France, pp.321-347}
\bibitem{F3}{FONTAINE J.:Repr\'esentations $p$-adiques semistables. In P\'eriodes $p$-adiques (1994), vol. 223 of Ast\'erisque, Soc. Math. de France, pp. 113-184}
\bibitem{FM}{FONTAINE J.-M. ET MAZUR B.: Geometric Galois representation. In Conference on Elliptic Curves and Modular Forms (Hong Kong, 1993), International Press, pp. 41-78}
\bibitem {Fu}{FUJIWARA K.: Deformation rings and Hecke algebras in the totally real case, Preprint, Nagoya University, 1996 }
\bibitem {Ge}{GERARDIN PAUL: Facteur locaux des algebres simples de rang 4.I. Groupes R\'eductif et Formes Automorphes, I (1978), Publications Math\'ematiques Univ Paris VII, 37-77  }
\bibitem {H1}{HIDA H.: Congruences of Cusp Forms and Special Values of Their Zeta Funcions. Inventiones Mathematicae 63, 225-261 (1981) }
\bibitem{H2}{HIDA, H.:On $p$-adic Hecke algebras for $GL_2$ over totally reals fields. Ann. of Math. 128  (1988), 295-384 }
\bibitem {H3}{HIDA, H.:Elementary theory of L-functions and Eisenstein series. Cambridge University Press, 1993. }
\bibitem {JL}{JACQUET H. ET LANGLANDS R.: Automorphic forms on $GL_2$, vol. 114, Lecture Notes Math. Springer (1970) }
\bibitem {lan}{LANGLANDS, R.: Modular Forms and $\ell$-adic representation. In Modular Functions of One  Variable II (1972), vol.349 of Lecture Notes Math., Springer, 361-500 }
\bibitem {M}{MAZUR B.: An introduction to the deformation theory of Galois representations. In Modular Forms and Fermat's Last  Theorem, G. Cornell, H. Silverman, and G. Stevens, Eds. Springer, 1997, 243-311.  }
\bibitem {Ma}{MAZUR B.: Deforming Galois Representations, In Galois Groups over $\QQ$, Ed. Ihara Ribet Serre, Springer (1989) }
\bibitem {LA}{MORI A., TERRACINI L.: A canonical map between Hecke algebras, Bollettino U.M.I. (8) 2-B (1999), 429-452 }
\bibitem{Rib}{RIBET K.A.: Mod p Hecke Operators and Congruences Between Modular Forms. Inventiones Mathematicae 71, 193-205 (1983) }
\bibitem {Rib1}{RIBET K.A.: On modular representations of Gal($\overline\QQ/\QQ$) arising from modular forms. Inventiones Mathematicae 100, 431-476 (1990) }

\bibitem{R2}{RIBET K. A.: Congruence Relations between Modular Forms. Proccedings of the International Congress of Mathematicians August 16-24,1983, Warszawa.}

\bibitem {savitt}{SAVITT DAVID: On a conjecture of Conrad, Diamond, and Taylor. Peprint. (April 19, 2004)  }
\bibitem{s}{SAITO T.: Modular forms and $p$-adic Hodge theory. Inventiones Mathematcae, 129 (1997), 607-620}
\bibitem {Sh}{G. SHIMURA: Introduction to the arithmetic theory of automorphic functions, Iwanami Shoten, Publishers and Princeton University Press, (1971) }
\bibitem{sw}{C.M. SKINNER AND A.J. WILES: Ordinary representations and modular forms. Proc. Natl. Acad. Sci. USA, Vol. 94, pp. 10520-10527, Semptember 1997 Mathematics}
\bibitem{taywi}{TAYLOR R., WILES A.: Ring-theoretic properties of certain Hecke algebras, Ann. Math. 141 (1995), 553-572 }
\bibitem {Lea}{TERRACINI L.: A Taylor-Wiles System for Quaternionic Hecke Algebras. Compositio Mathematica 137: 23-47, 2003  }
\bibitem {W}{WASHINGTON L.C.: Galois representations. In Modular Forms and Fermat's Last  Theorem, G. Cornell, H. Silverman, and G. Stevens, Eds. Springer, 1997, 101-120.  }
\bibitem{We}{WESTON T.: Explicit unobstructed primes for modular deformation problems of squarefree level. In J. of Number Theory 110, 2005, 199-218 }
\bibitem {wi}{WILES A.: Modular elliptic curves and Fermat last Theorem, Ann. of Math., 141 (1995), 443-551.}


\end{thebibliography}
\end{document}